\newtheorem{thm}{Theorem}[section]
\newtheorem{lem}[thm]{Lemma}
\newtheorem{prop}[thm]{Proposition}
\newtheorem{prob}[thm]{Problem}
\theoremstyle{definition}
\numberwithin{equation}{section}
\def\Gal{\text{Gal}}
\def\Hom{\text{Hom}}
\def\gcd{\text{gcd}}
\def\Spec{\text{Spec}}
\def\Im{\text{Im}}
\def\Gon{\text{Gon}}
\def\div{\text{div}}
\newcommand{\hooklongrightarrow}{\lhook\joinrel\longrightarrow}
\begin{document}


\baselineskip=17pt



\title[Cyclic torsion]{On the cyclic torsion of elliptic curves over cubic number fields.}

\author[J. Wang]{Jian Wang}
\address{Department of Mathematics\\ University of Southern California\\
Los Angeles, CA 90089, USA}

\address{Current address: Department of Mathematical Sciences\\ Tsinghua University\\
Beijing, 100084, China}
\email{blandye@gmail.com}

\date{\today}

\begin{abstract}
Let $E$ be an elliptic defined over a number field $K$. Then its Mordell-Weil group $E(K)$ is finitely generated:
$E(K)\cong E(K)_{tor}\times\mathbb{Z}^r$. In this paper, we discuss the cyclic torsion subgroup of elliptic curves over cubic number fields. For $N=169,143,91,65,77$ or $55$, we show that $\mathbb{Z}/N\mathbb{Z}$ is not a subgroup of $E(K)_{tor}$ for any elliptic curve $E$ over a cubic number field $K$.
\end{abstract}

\subjclass[2010]{11G05,11G18}

\keywords{torsion subgroup, elliptic curves, modular curves}

\maketitle

\section{Introduction}

Let $E$ be an elliptic defined over a number field $K$. Then its Mordell-Weil group $E(K)$ is finitely generated:
$$E(K)\cong E(K)_{tor}\times\mathbb{Z}^r$$
For a fixed elliptic $E$ over a field $K$, the torsion component $E(K)_{tor}$ can be calculated due to the Nagell-Lutz-Cassels theorem \cite{Cassels}. However, if we consider a class of elliptic curves, it is usually difficult to list exactly all the possible group structures of $E(K)_{tor}$. The following problem is one of this kind.

\begin{prob}
For an integer $d\geq1$, what are the possible group structures of $E(K)_{tor}$ with $[K:\mathbb{Q}]=d$?
\end{prob}

For $d=1$, i.e. $K=\mathbb{Q}$, by the work of Kubert \cite{Kubert} and Mazur \cite{Mazur}, the torsion group $E(\mathbb{Q})_{tor}$ of an elliptic curve $E$ over the rational number field is isomorphic to one of the following:
$$\aligned&\mathbb{Z}/m\mathbb{Z},&m=1-10,12;\\&\mathbb{Z}/2\mathbb{Z}\times\mathbb{Z}/2m\mathbb{Z},&m=1-4.\endaligned$$

For $d=2$, by the work of Kenku-Momose \cite{KenkuMomose} and Kamienny \cite{Kamienny1}, the torsion group $E(K)_{tor}$ of an elliptic curve over a quadratic number field is isomorphic to one of the following:
$$\aligned&\mathbb{Z}/m\mathbb{Z},&m=1-16,18;
\\&\mathbb{Z}/2\mathbb{Z}\times\mathbb{Z}/2m\mathbb{Z},&m=1-6;
\\&\mathbb{Z}/3\mathbb{Z}\times\mathbb{Z}/3m\mathbb{Z},&m=1-2;
\\&\mathbb{Z}/4\mathbb{Z}\times\mathbb{Z}/4\mathbb{Z}.\endaligned$$

For $d=3$, Parent \cite{Parent2,Parent3} showed that the prime divisors of the order of $E(K)_{tor}$ are $\leq13$. Jeon-Kim-Schweizer \cite{JeonKimSchweizer} determined all the torsion structures that appear infinitely often when we run through all elliptic curves over all cubic fields:
$$\aligned&\mathbb{Z}/m\mathbb{Z},&m=1-16,18,20;\\&\mathbb{Z}/2\mathbb{Z}\times\mathbb{Z}/2m\mathbb{Z},&m=1-7.\endaligned$$
Najman \cite{Najman} discovered a \emph{sporadic} elliptic curve over a cubic field with torsion group isomorphic to $\mathbb{Z}/21\mathbb{Z}$. In view of these facts, our ultimate aim is to show that the torsion group $E(K)_{tor}$ of an elliptic curve $E$ over a cubic number field is isomorphic to one of the following:
$$\aligned&\mathbb{Z}/m\mathbb{Z},&m=1-16,18,20-21;\\&\mathbb{Z}/2\mathbb{Z}\times\mathbb{Z}/2m\mathbb{Z},&m=1-7.\endaligned$$
For the cyclic case, it suffices to show that $\mathbb{Z}/N\mathbb{Z}$ is not a subgroup of $E(K)_{tor}$ for any elliptic curve $E$ over a cubic number field $K$ when $N$ is among the following list
$$\aligned N&=169,121,49,25,27,32;\\
N&=143,91,65,39,26,77,55,33,22,35,63,42,28,45,30,40,36,24.\endaligned$$

The main result of this paper is the following:
\begin{thm}\label{T1} If $N=169,143,91,65,77$ or $55$, then $\mathbb{Z}/N\mathbb{Z}$ is not a subgroup of $E(K)_{tor}$ for any elliptic curve $E$ over a cubic number field $K$.
\end{thm}

\section{Preliminaries}

Let $\mathbb{H}=\{z\in\mathbb{C}~|~\Im z>0\}$ be the upper half plane. Let $\overline{\mathbb{H}}=\mathbb{H}\cup\mathbb{P}^1(\mathbb{Q})$ be the extended upper half plane by adjoining cusps $\mathbb{P}^1(\mathbb{Q})=\mathbb{Q}\cup\{\infty\}$ to $\mathbb{H}$. Let $N$ be a positive integer. Let
$$\aligned\Gamma_0(N)&=\left\{\left(
                          \begin{array}{cc}
                            a & b \\
                            c & d \\
                          \end{array}
                        \right)\in SL_2(\mathbb{Z})/(\pm1)|c\equiv0\mod N
\right\}\\
\Gamma_1(N)&=\left\{\left(
                          \begin{array}{cc}
                            a & b \\
                            c & d \\
                          \end{array}
                        \right)\in \Gamma_0(N)|a\equiv d\equiv1\mod N
\right\}
\endaligned$$
be the congruence subgroups and let $X_1(N)$ (resp. $X_0(N)$) be the modular curve which corresponds to the modular group $\Gamma_1(N)$ (resp. $\Gamma_0(N)$). We denote by $Y_1(N)=X_1(N)\backslash\{cusps\}$, $Y_0(N)=X_0(N)\backslash\{cusps\}$ the corresponding affine curves. Denote by $J_1(N)$ (respectively, $J_0(N)$) the jacobian of $X_1(N)$ (respectively, $X_0(N)$).

For a modular curve $X$, let $X^{(d)}$ be the $d$-th symmetric power of $X$, i.e. the quotient space of the $d$-fold product $X^d$ by the action of the symmetric group $S_d$ permuting the factors. Let $\overline{\mathbb{Q}}$ be the algebraic closure of $\mathbb{Q}$. Then $X^{(d)}(\overline{\mathbb{Q}})$, the set of algebraic points of $X^{(d)}$, corresponds one-to-one to the set $\{P_1+\cdots+P_d; P_i\in X(\overline{\mathbb{Q}})\}$ of positive $\overline{\mathbb{Q}}$-rational divisors of degree $d$ of $X$.

Let $K$ be a number field of degree $d$ over $\mathbb{Q}$. Let $x\in X(K)$. Let $x_1,\cdots,x_d$ be the images of $x$ under the distinct embeddings $\tau_i:K\hooklongrightarrow\mathbb{C}, 1\leq i\leq d$. We may view $x_1+\cdots+x_d$ is a $\mathbb{Q}$-rational point of $X^{(d)}$. Define $$\Phi:X^{(d)}\longrightarrow J_X$$
by $\Phi(P_1+\cdots+P_d)=[P_1+\cdots+P_d-d\infty]$ where $J_X$ is the jacobian of $X$, and $[~~]$ denotes the divisor class.

For a modular curve $X$ over $\mathbb{C}$, $X$ is called $d$-gonal if there exists a finite $\mathbb{C}$-morphism $\pi:X\longrightarrow \mathbb{P}_\mathbb{C}^1$ of degree $d$. The minimum $d$ is called the \emph{gonality} of $C$, which we denote as $\Gon(C)$. The following lemma is a generalization of proposition 1(i) in Frey \cite{Frey}.

\begin{lem}[Frey]\label{Frey} Assume that $Gon(X)>d$. Then $\Phi$ is injective.
\end{lem}

\begin{proof} Suppose otherwise $\Phi$ is not injective, i.e. there exists different $P_1+\cdots+P_d$ and $Q_1+\cdots+Q_d$ in $X^{(d)}$ such that $\Phi(P_1+\cdots+P_d)=\Phi(Q_1+\cdots+Q_d)$, then $$[P_1+\cdots+P_d-d\infty]=[Q_1+\cdots+Q_d-d\infty]\in J_X$$
then there is a nonconstant function $f\in \mathbb{C}(X)^*$ such that
$$\aligned\div(f)&=(P_1+\cdots+P_d-d\infty)-(Q_1+\cdots+Q_d-d\infty)
\\&=P_1+\cdots+P_d-Q_1-\cdots-Q_d\endaligned$$
which means $f$ has a pole divisor of degree $\leq d$. Consider the map $\pi:X\longrightarrow\mathbb{P}_{\mathbb{C}}^1$ defined by $P\longmapsto[f(P),1]$. Then the degree of $\pi$ is equal to the degree of pole divisor of $f$. This contradicts the assumption $\Gon(X)>d$.
\end{proof}

We are interested in the gonality of the modular curve $X_0(N)$ over $\mathbb{C}$.  Since the 1-gonal curves are precisely the curves of genus 0, then $X_0(N)$ is 1-gonal if and only if $N$ is among the fifteen values $N=1-10,12,13,16,18,25$ with genus 0. The complete list of 2-gonal $X_0(N)$ was determined by Ogg \cite{Ogg1}, and that of 3-gonal ones by Hasegawa-Shimura \cite{HasegawaShimura}.

\begin{prop}[Ogg]\label{Ogg} The modular curve $X_0(N)$ is 2-gonal if and only if $N$ is one of the following:
$$\aligned N&=1-10,12,13,16,18,25   &(g=0);\\
N&=11,14,15,17,19-21,24,27,32,36,49   &(g=1);\\
N&=22,23,26,28,29,31,37,50   &(g=2);\\
N&=30,33,35,39,40,41,48   &(g=3);\\
N&=47     &(g=4);\\
N&=46,59   &(g=5);\\
N&=71   &(g=6).\endaligned$$
\end{prop}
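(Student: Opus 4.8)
The plan is to convert the gonality statement into one about the genus, and thence into one about hyperellipticity. Recall that a curve of genus $0$ is $\mathbb{P}^1_{\mathbb{C}}$ and so carries degree-$2$ maps to $\mathbb{P}^1_{\mathbb{C}}$; a curve of genus $1$ admits such a map through its degree-$2$ ``$x$-coordinate'' linear system; and every curve of genus $2$ is hyperelliptic. Hence the condition that $X_0(N)$ admit a degree-$2$ morphism to $\mathbb{P}^1$ is automatic whenever $g(X_0(N))\le 2$, whereas for $g(X_0(N))\ge 3$ it is equivalent to $X_0(N)$ being hyperelliptic. I would therefore first compute $g(X_0(N))$ for all $N$ from the standard formula
\[
g=1+\frac{\mu}{12}-\frac{\nu_2}{4}-\frac{\nu_3}{3}-\frac{\nu_\infty}{2},
\]
where $\mu=[\mathrm{PSL}_2(\mathbb{Z}):\overline{\Gamma_0(N)}]$ and $\nu_2,\nu_3,\nu_\infty$ count the elliptic points of orders $2,3$ and the cusps. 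Since $\mu=N\prod_{p\mid N}\bigl(1+\tfrac{1}{p}\bigr)$ grows without bound, only finitely many $N$ satisfy $g\le 2$, and an explicit enumeration isolates exactly the listed values with $g=0,1,2$. This disposes of the rows $g=0,1,2$ at once and reduces everything to deciding hyperellipticity in the finitely many cases with $g\ge 3$.

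For the positive direction in genus $\ge 3$, I would exhibit for each listed $N$ an Atkin--Lehner involution $w_Q$ (for a Hall divisor $Q\,\|\,N$) whose quotient has genus $0$. Writing $f$ for the number of fixed points of $w_Q$ on $X_0(N)$, the Riemann--Hurwitz formula for the degree-$2$ cover $X_0(N)\to X_0(N)/w_Q$ reads
\[
2g(X_0(N))-2=2\bigl(2\,g(X_0(N)/w_Q)-2\bigr)+f,
\]
so $g(X_0(N)/w_Q)=0$ exactly when $f=2g(X_0(N))+2$. The count $f$ is given by Ogg's fixed-point formula in terms of the class numbers $h(-Q)$ and $h(-4Q)$ (with small corrections at $2$ and $3$), so for each candidate $N$ this is a finite verification; matching the two sides produces the hyperelliptic involution and proves that $\Gon(X_0(N))=2$.

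The substance of the argument is the negative direction: every $N$ with $g(X_0(N))\ge 3$ outside the list must be shown \emph{not} to be hyperelliptic. Here I would combine two ingredients. First, a uniqueness/centrality step: for $g\ge 2$ the hyperelliptic involution is canonical, hence central in $\Aut(X_0(N))$ and commuting with the Hecke operators, which one uses to force any hyperelliptic involution to lie in the normalizer of $\Gamma_0(N)$, i.e.\ to be (outside the finitely many known exceptional automorphisms) an Atkin--Lehner involution; thus the only candidate involutions are the $w_Q$. Second, an arithmetic bound via reduction: choosing a small prime $p\nmid N$, the curve $X_0(N)_{/\mathbb{F}_{p^2}}$ is smooth and all of its supersingular points are $\mathbb{F}_{p^2}$-rational, while a hyperelliptic curve admits a degree-$2$ map to $\mathbb{P}^1$ and so satisfies $\#X_0(N)(\mathbb{F}_{p^2})\le 2(p^2+1)$. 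Since the number of supersingular points grows linearly in $\mu$ (Eichler mass formula), fixing $p$ bounds $N$, leaving only finitely many curves to inspect; for each remaining $N$ one computes $g(X_0(N)/w_Q)$ for every Hall divisor $Q$ and checks that none of the quotients has genus $0$, which together with the centrality step certifies that no degree-$2$ map exists.

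The main obstacle is this last paragraph, and within it the control of \emph{all} involutions rather than merely the Atkin--Lehner ones: a priori a hyperelliptic involution could be an exotic automorphism not arising from the normalizer of $\Gamma_0(N)$, and ruling this out (equivalently, pinning down $\Aut(X_0(N))$ outside a small exceptional set) is the delicate point, as the genus-$2$ curve $X_0(37)$ already illustrates that exceptional hyperelliptic involutions do occur. Reconciling the reduction-mod-$p$ bound with the exact fixed-point computations so as to cover every borderline value of $N$ is where the real care is required.
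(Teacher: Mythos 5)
The paper offers no proof of this proposition: it is quoted from Ogg's \emph{Hyperelliptic modular curves} and used as a black box, so there is nothing internal to compare your argument against. Your reconstruction is, in outline, Ogg's actual proof: the rows with $g\le 2$ are automatic; the finiteness in the negative direction comes from reducing modulo a small prime $p\nmid N$, using that the supersingular points are all $\mathbb{F}_{p^2}$-rational and that their number grows linearly in the index $\mu$, played against the bound $\#X(\mathbb{F}_{p^2})\le 2(p^2+1)$ valid for any curve carrying a degree-$2$ map to $\mathbb{P}^1$; the surviving finitely many levels are then settled by locating the hyperelliptic involution. You also correctly identify the genuinely delicate point, namely controlling involutions that do not come from the normalizer of $\Gamma_0(N)$.

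There is, however, one concrete error in your positive direction: it is not true that every listed $N$ with $g\ge 3$ admits an Atkin--Lehner involution $w_Q$ whose quotient has genus $0$. For $N=40$ and $N=48$ no quotient $X_0(N)/w_Q$ has genus $0$ (for instance $X_0(40)/w_{40}$ has genus $1$, and $w_8$ acts on $X_0(40)$ without fixed points, so that quotient has genus $2$); Ogg shows that in these two cases the hyperelliptic involution is an element of the normalizer of $\Gamma_0(N)$ in $\mathrm{PSL}_2(\mathbb{R})$ that is not of the form $w_Q$, such extra normalizing elements existing because $4\mid N$. For $N=37$ the hyperelliptic involution is not in the normalizer at all; this is harmless only because $g(X_0(37))=2$ makes hyperellipticity automatic. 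So your fixed-point verification must be run over the full normalizer rather than just the Atkin--Lehner group, and the negative direction must be supplemented by the automorphism-group analysis you sketch in your final paragraph; with those corrections the strategy does yield the stated list.
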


\begin{prop}[Hasegawa-Shimura]\label{HasegawaShimura} The modular curve $X_0(N)$ is 3-gonal if and only if $N$ is one of the following:
$$\aligned N&=1-10,12,13,16,18,25   &(g=0);\\
N&=11,14,15,17,19-21,24,27,32,36,49   &(g=1);\\
N&=22,23,26,28,29,31,37,50   &(g=2);\\
N&=34,43,45,64   &(g=3);\\
N&=38,44,53,54,61,81 &(g=4).\endaligned$$
\end{prop}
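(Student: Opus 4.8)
The plan is to organize the proof by the genus $g=g(X_0(N))$, computed from the standard formula $g=1+\frac{\mu}{12}-\frac{\nu_2}{4}-\frac{\nu_3}{3}-\frac{\nu_\infty}{2}$, where $\mu=[\mathrm{SL}_2(\mathbb{Z}):\Gamma_0(N)]$ and $\nu_2,\nu_3,\nu_\infty$ count the elliptic points and cusps. By the definition used here, $X_0(N)$ is $3$-gonal exactly when it carries a base-point-free $g^1_3$ (a finite degree-$3$ morphism to $\mathbb{P}^1$). First I would enumerate all $N$ with $g\leq 2$: Riemann--Roch guarantees a base-point-free $g^1_3$ on every curve of genus $\leq 2$, so all of these are $3$-gonal, and since $g\to\infty$ with $\mu$ this enumeration is finite and yields the rows $g=0,1,2$. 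The content of the theorem is thus the behavior for $g\geq 3$, and in particular the assertion that no $X_0(N)$ of genus $\geq 5$ is $3$-gonal.

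For $g=3$ and $g=4$ the question is classical. A non-hyperelliptic genus-$3$ curve is a smooth plane quartic, and projection from a point lying on it is a finite degree-$3$ map; a non-hyperelliptic genus-$4$ curve lies on a unique quadric in its canonical $\mathbb{P}^3$, and a ruling of that quadric cuts out a $g^1_3$. Hence for $g\in\{3,4\}$ being $3$-gonal is equivalent to being non-hyperelliptic. I would therefore read off from the genus formula the complete lists of $N$ with $g(X_0(N))=3$ and $=4$, and then discard the hyperelliptic members using Ogg's classification (Proposition~\ref{Ogg}); what remains is forced to be $\{34,43,45,64\}$ and $\{38,44,53,54,61,81\}$.

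The difficult range is $g\geq 5$, where a $g^1_3$ is Brill--Noether special ($\rho=4-g<0$) and one must rule it out entirely. I would first make the problem finite by invoking an effective lower bound for the gonality linear in the index, of the form $\Gon(X_0(N))\geq c\,\mu$ (Abramovich's bound via the Li--Yau eigenvalue estimate, or a reduction-modulo-$p$ point count in the spirit of Ogg, exploiting that supersingular points give many $\mathbb{F}_{p^2}$-rational points on the good reduction); since $3$-gonality forces $\Gon\leq 3$ this caps $\mu$, leaving finitely many explicit $N$. For each surviving candidate I would compute the canonical model from a basis of the weight-$2$ cusp forms for $\Gamma_0(N)$ and apply the Enriques--Babbage--Petri theorem: a canonical curve of genus $\geq 4$ has its ideal generated by quadrics unless it is trigonal (the quadrics cutting out a rational normal scroll) or a smooth plane quintic (the quadrics cutting out a Veronese surface), a dichotomy checkable by linear algebra on $q$-expansions. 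Showing that every candidate of genus $\geq 5$ fails the scroll test is the main obstacle and the heaviest computation; a structural alternative, since a trigonal $g^1_3$ is unique for $g\geq 5$ and hence stable under the Atkin--Lehner involutions, is to combine that stability with the Castelnuovo--Severi inequality to exclude the pencil a priori. Collating the four genus ranges produces exactly the stated list.
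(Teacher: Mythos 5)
This proposition is not proved in the paper at all: it is quoted as a known classification and attributed to the cited reference of Hasegawa--Shimura, so there is no internal proof to compare against. Judged on its own, your outline is a faithful reconstruction of how the result is actually established there: split by genus, observe that every curve of genus $\le 2$ admits a degree-$3$ map, that for $g=3,4$ trigonality is equivalent to non-hyperellipticity (so the $g=3,4$ rows follow from the genus formula together with Ogg's list in Proposition~\ref{Ogg}), and then show that no $X_0(N)$ of genus $\ge 5$ carries a base-point-free $g^1_3$ by first bounding $N$ through a gonality lower bound (Ogg's supersingular-point count modulo a small prime, or Abramovich's bound) and then checking the finitely many survivors on the canonical model via the Enriques--Babbage--Petri dichotomy, supplemented by Castelnuovo--Severi and the uniqueness (hence Atkin--Lehner equivariance) of the trigonal pencil when $g\ge 5$. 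Two caveats. First, a small imprecision: for $g=2$ the pencil $|K+P|$ you would naively write down from Riemann--Roch has $P$ as a base point; one must instead take a generic non-special divisor $D$ of degree $3$ (i.e.\ one not of the form $K+Q$), for which $|D|$ is a base-point-free $g^1_3$. Second, the proposal is a program rather than a proof: the entire content of the theorem for $g\ge 5$ is the finite but substantial verification that every candidate fails the scroll test, and that computation is only named, not carried out. As a blind reconstruction of the strategy behind a cited external result, however, it is sound.
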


The moduli interpretation of noncuspidal points of $X_1(N)$ are $(E,\pm P)$, where $E$ is an elliptic curve and $P\in E$ is a point of order $N$.  The moduli interpretation of noncuspidal points of $X_0(N)$ are $(E, C)$, where $E$ is an elliptic curve and $C\subset E$ is a cyclic subgroup of order $N$. The map $\pi: X_1(N)\longrightarrow X_0(N)$ send $(E,\pm P)$ to $(E,\langle P\rangle)$, where $\langle P\rangle$ is the cyclic subgroup generated by $P$.

Let $p$ be a prime such that $p\nmid N$. Igusa's theorem \cite{Igusa} says that the modular curves $X_1(N)$ and $X_0(N)$ have good reduction at prime $p$. Moreover, reducing the modular curve is compatible with reducing the moduli interpretation (See for example \cite[Theorem 1]{Ogg75}). And the description of the cusps is the same in characteristic $p$ as in characteristic $0$.

Let $k=\mathbb{F}_q$ be the finite field with $q=p^n$ elements. Let $E/k$ be an elliptic curve over $k$. Let $|E(k)|$ be the number of points of $E$ over $k$. Then Hasse's theorem states that
$$||E(k)|-q-1|\leq2\sqrt{q}$$
i.e.
$$(1-\sqrt{p^n})^2\leq|E(k)|\leq(1+\sqrt{p^n})^2$$

The description of the reduction types of elliptic curves in terms of the language of  N\'{e}ron models can be summarised as the Kodaira-N\'{e}ron theorem \cite{Kodaira} \cite{Neron}. A complete proof of this theorem can be found in \cite[IV \S8 \S9]{Silverman2}.

\begin{thm}[Kodaira-N\'{e}ron] Let $R$ be a Dedekind domain with field of fractions
$K$, let $\mathcal{E}$ be a N\'{e}ron model over $R$ for an elliptic curve $E/K$, and let $\wp\subset R$
be any nonzero prime ideal with residue field $k$. Let $\widetilde{E}$ be the fibre over $k$ of $\mathcal{E}$.

(1): If $E$ has good reduction at $\wp$,
then $\widetilde{E}(k)=\widetilde{E}(k)^0$ is an elliptic curve, where $\widetilde{E}(k)^0$ denotes the connected component of the identity.

(2): If $E$ has additive reduction at $\wp$,
then $\widetilde{E}(k)^0\cong\mathbb{G}_{a/k}$ , and $\widetilde{E}(k)/\widetilde{E}(k)^0=G$ is a finite group of order at most four.

(3): If $E$ has multiplicative reduction at $\wp$, then
there exists an extension $\mathscr{K}$ of $k$ of degree at most two so that $\widetilde{E}(\mathscr{K})^0\cong\mathbb{G}_{m/\mathscr{K}}$ and
$\widetilde{E}(\mathscr{K})/\widetilde{E}(\mathscr{K})^0\cong \mathbb{Z}/n\mathbb{Z}$ for some positive integer $n$. \end{thm}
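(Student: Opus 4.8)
The plan is a local analysis of the reduction of an assumed $K$-rational point $P$ of exact order $N$, where $K/\mathbb{Q}$ is cubic; equivalently, $(E,P)$ is a noncuspidal $K$-rational point $x$ of $Y_{1}(N)$. Write $N=\ell^{a}M$ with $\ell$ an auxiliary rational prime and $M$ the prime-to-$\ell$ part of $N$, chosen small and, when possible, coprime to $N$ (so $M=N$). Fix a prime $\mathfrak{p}\mid\ell$ of $\mathcal{O}_{K}$ with residue field $k=\mathbb{F}_{\ell^{f}}$, where $f\le 3$ since $[K:\mathbb{Q}]=3$. Because the kernel of reduction $E(K_{\mathfrak{p}})\to\widetilde{E}(k)$ is the formal group, which is pro-$\ell$, reduction is injective on prime-to-$\ell$ torsion; thus the prime-to-$\ell$ component $\ell^{a}P$, of order $M$, maps to a point of $\widetilde{E}(k)$ of the same order. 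The argument then branches on the reduction type of $E$ at $\mathfrak{p}$, as classified by the Kodaira--N\'{e}ron theorem.

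In the good-reduction case, $\widetilde{E}(k)$ is the group of an elliptic curve over $\mathbb{F}_{\ell^{f}}$ containing an element of order $M$, so $M\mid|\widetilde{E}(\mathbb{F}_{\ell^{f}})|$ and Hasse's bound gives $M\le(1+\ell^{f/2})^{2}\le(1+\ell^{3/2})^{2}$. For $N\in\{55,77,91,143\}$ I take $\ell=2$, where $(1+2^{3/2})^{2}<15$ contradicts $M\ge 55$; for $N=40$ I take $\ell=3$, where $(1+3^{3/2})^{2}<40$ contradicts $M=40$. In the borderline case $N=22$ with $\ell=2$ one has $M=11$ and $(1+2^{3/2})^{2}\approx14.6$, so the only candidate is $|\widetilde{E}(\mathbb{F}_{8})|=11$, forcing trace $t=8+1-11=-2$; but $t=-2$ is neither prime to $\ell=2$ nor one of the admissible supersingular traces $\{0,\pm 4\}$ for $q=2^{3}$ in Waterhouse's classification (Proposition \ref{ttheorem}), so it does not occur. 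Hence good reduction is excluded in every case.

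In the additive-reduction case, the Kodaira--N\'{e}ron theorem gives $\widetilde{E}(k)^{0}\cong\mathbb{G}_{a/k}$, which has exponent $\ell$ and hence no prime-to-$\ell$ torsion; so the cyclic group of order $M$ generated by the image of $\ell^{a}P$ meets $\widetilde{E}(k)^{0}$ trivially and injects into the component group $\widetilde{E}(k)/\widetilde{E}(k)^{0}$, a finite group of order at most four. Since $M>4$ for each listed $N$ and the chosen $\ell$ (namely $M=11,55,77,91,143$ for $\ell=2$ and $M=40$ for $\ell=3$), this is impossible, so additive reduction is excluded as well.

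The remaining, and genuinely hard, case is multiplicative reduction, where no size estimate applies directly because the component group $\mathbb{Z}/n\mathbb{Z}$ is a priori unbounded. Here I would use the Tate parametrization $E_{K_{\mathfrak{p}}}\cong\mathbb{G}_{m}/q^{\mathbb{Z}}$, under which $\widetilde{E}^{sm}$ is the smooth locus of a N\'{e}ron polygon and the prime-to-$\ell$ part of $E[N]$ splits as a $\mu$-direction $\langle\zeta\rangle$ inside the identity component and an \'{e}tale direction $\langle q^{1/\bullet}\rangle$ surjecting onto the component group, matching the sequence $0\to\mu_{n}\to C_{n}^{sm}[n]\to\mathbb{Z}/n\mathbb{Z}\to0$. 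Since the torsion of $\widetilde{E}^{0}(\mathbb{F}_{\ell^{f}})$ is cyclic of order $\ell^{f}\mp1$, the large prime divisors of $N$ (whose multiplicative order modulo $\ell$ exceeds $f$) cannot appear in the $\mu$-direction, so $P$ is forced largely into the \'{e}tale direction and $x$ reduces at $\mathfrak{p}$ to a cusp of $X_{1}(N)$, described moduli-theoretically as a N\'{e}ron polygon. Such a reduction is perfectly $\mathbb{F}_{\ell^{f}}$-rational, so there is no purely local obstruction, and the contradiction must come from global rigidity. I expect this to be the main obstacle, and would close it by the Mazur--Kamienny--Parent mechanism: embed $X_{1}(N)^{(3)}$ into a quotient $J$ of $J_{1}(N)$ of rank zero over $\mathbb{Q}$ via the cuspidal Abel--Jacobi map, note that $J(\mathbb{Q})$ is finite and injects into $J(\mathbb{F}_{\ell})$, and verify that this map is a formal immersion at the reduced cuspidal point, forcing the degree-$3$ divisor $x$ to be cuspidal and contradicting the noncuspidality of $(E,P)$. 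It is exactly the difficulty of producing the required rank-zero quotient and checking the formal-immersion criterion for every admissible $N$ that confines Theorem \ref{T2} to the stated partial list.
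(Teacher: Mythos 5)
Your proposal does not address the statement it is supposed to prove. The statement here is the Kodaira--N\'eron structure theorem for the special fibre of a N\'eron model: the three assertions about $\widetilde{E}(k)^{0}$ and the component group in each of the good, additive and multiplicative cases. What you have written is instead an attack on Theorem \ref{T2} of the paper (nonexistence of $\mathbb{Z}/N\mathbb{Z}$ in $E(K)_{tor}$ over cubic $K$ for the listed $N$), and in both your additive and multiplicative branches you explicitly \emph{invoke} the Kodaira--N\'eron theorem as a known input (``the Kodaira--N\'eron theorem gives $\widetilde{E}(k)^{0}\cong\mathbb{G}_{a/k}$''). Read as a proof of the statement itself, your text is therefore circular: at no point do you construct or analyze the N\'eron model or its special fibre, and none of assertions (1), (2), (3) is actually derived.

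For the record, the paper does not prove this theorem either; it records it as background and refers to Silverman \cite{Silverman2} for a complete proof. The standard argument runs through the minimal proper regular model and Tate's algorithm: one matches the Kodaira fibre types with the reduction types, identifies the identity component of the smooth locus of the special fibre with an elliptic curve (good reduction), with $\mathbb{G}_{a/k}$ (additive reduction), or with a form of $\mathbb{G}_{m}$ that splits after an extension $\mathscr{K}/k$ of degree at most two (multiplicative reduction, via the Tate parametrization, the quadratic extension being needed exactly in the nonsplit case), and then reads off the component groups: trivial, $\mathbb{Z}/2\mathbb{Z}$, $\mathbb{Z}/3\mathbb{Z}$, $\mathbb{Z}/4\mathbb{Z}$ or $(\mathbb{Z}/2\mathbb{Z})^{2}$ --- hence order at most four --- in the additive case, and cyclic of order $n$ equal to the valuation of the minimal discriminant for type $I_{n}$. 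None of this appears in your proposal. Incidentally, even judged as a sketch of Theorem \ref{T2}, your outline would not run through the paper's machinery: your choice $\ell=2$ is barred by the hypothesis $p>2$ in Theorems \ref{T3} and \ref{T4} (and Lemma \ref{Katz} requires $e_{\wp}(K/\mathbb{Q})<p-1$, which is unsatisfiable at $p=2$), whereas the paper works at $p=3$ (or $p=5$ for $N=169$), handling $N=143,91,77$ by Kamienny's criterion and $N=55,40,22$ by the rank-zero-quotient and good-reduction argument of Lemma \ref{reductionlemma}.
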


Let $K$ be a number field with ring of integers $\mathcal{O}_K$, $\wp\subset\mathcal{O}_K$ a prime ideal lying above $p$, $k=\mathbb{F}_q=\mathcal{O}_K/\wp$ its residue field. Let $E$ be an elliptic curve over $K$ and $P\in E(K)$ a point of order $N$. Let $\widetilde{E}$ be the fibre over $k$ of the N\'{e}ron model of $E$, and let $\widetilde{P}\in\widetilde{E}(k)$ be the reduction of $P$. Suppose that $p\nmid N$. Then elementary theory of group schemes shows that $\widetilde{P}$ has order $N$ due to the following well-known result (See for example \cite[\S 7.3 Proposition 3]{BoschLutkebohmertRaynaud}).

\begin{prop}\label{torsionreduction} Let $m$ be a positive integer relatively prime to $char(k)$. Then the reduction map
$$E(K)[m]\longrightarrow\widetilde{E}(k)$$
is injective.
\end{prop}

The N\'{e}ron-Kodaira theorem lead to Deligne-Rapoport's treatment of modular curves as moduli scheme of generalized elliptic curves \cite{DeligneRapoport}. Katz-Mazur \cite{KatzMazur} developed the theory of Drinfeld level structures on elliptic curves. Conrad \cite{Conrad} improved this theory by extending it on to generalized elliptic curves. We explain here the notions and results in these theories that are necessary in Section \ref{method}.

Let $n\geq1$ be an integer and let $k$ be a field. The \emph{N\'{e}ron $n$-gon} over $k$, denoted $C_n$, is the quotient of $(\mathbb{P}^1)_k\times\mathbb{Z}/n\mathbb{Z}$ where $(\infty,i)$ is identified with $(0,i+1)$. It has $n$ irreducible components $(\mathbb{P}^1)_k\times d, d\in\mathbb{Z}/n\mathbb{Z}$, of which $(\mathbb{P}^1)_k\times 0$ is called the identity component. The smooth locus $C^{sm}_n=\mathbb{G}_m\times\mathbb{Z}/n\mathbb{Z}$ of $C_n$ is a group. Furthermore, the action of $C^{sm}_n$ on itself extends to an action of $C^{sm}_n$ on all of $C_n$: the $\mathbb{G}_m$ part fixes the singular points. The $N$-torsion part $C^{sm}_n[n]$ has order $n^2$. In fact, there is a natural short exact sequence
$$0\longrightarrow\mu_n\longrightarrow C^{sm}_n[n]\longrightarrow\mathbb{Z}/n\mathbb{Z}\longrightarrow0$$
where the $\mu_n$ sits in the identity component of $C^{sm}_n$.

A \emph{generalized elliptic curve} over a base scheme $S$ is a tuple $(E,+,e)$, where $E/S$ is a proper flat curve, $e\in E(S)$, and $+$ is a map $E^{sm}\times E\longrightarrow E$ such that: (1) $+$ (with $e$) gives $E^{sm}$ the structure of a group and defines an action on $E$; (2) the geometric fibers of $E$ are elliptic curves or N\'{e}ron $n$-gons.

Denote $S=\Spec~\mathbb{Z}$. For $N\geq5$, $X_1(N)_{/S}$ is the fine moduli scheme which classify the generalized elliptic curves $E$ with a torsion point $P$ of order $N$; $X_0(N)_{/S}$ is the coarse moduli scheme which classify the generalized elliptic curves $E$ with a cyclic subgroup $C$ of order $N$. There is a natural morphism $X_1(N)_{/S}\longrightarrow X_0(N)_{/S}: (E,\pm P)\longmapsto(E,\langle P\rangle)$, where $\langle P\rangle$ is the cyclic subgroup generated by $P$.

Now we can describe the moduli interpretation of the cusps on the generic fiber $X_1(N)$ (resp. $X_0(N)$) of $X_1(N)_{/S}$ (resp. $X_0(N)_{/S}$). The moduli interpretation of cusps of $X_1(N)$ is that for each $d~|~N$, one has cusps $(C_d,(\zeta_N^r,b))$ where $b\in(\mathbb{Z}/d\mathbb{Z})^\times$ and $r\in\mathbb{Z}/N\mathbb{Z}$ maps to a unit in $\mathbb{Z}/(N/d)\mathbb{Z}$. It is easy to see $(\zeta_N^r,b)$ is a point of order $N$ in the smooth locus $C_d^{sm}=\mathbb{G}_m\times\mathbb{Z}/d\mathbb{Z}$. The moduli interpretation of cusps of $X_0(N)$ is that for each $d~|~N$, one has cusps $(C_d, G)$, where $G$ is a cyclic subgroup of order $N$ in the smooth locus $C_d^{sm}=\mathbb{G}_m\times\mathbb{Z}/d\mathbb{Z}$ that meets all the irreducible components. Especially for $d=1$ and $d=N$, we have the cusps $(C_1,\mu_N)$ and $(C_N,\mathbb{Z}/N\mathbb{Z})$, which we denote as $0$ and $\infty$ respectively. Note that $0$ is distinguished from $\infty$ by the fact that $\mu_N$ lives in the identity component.

In the following section, we use a specialization lemma in Appendix of Katz \cite{Katz} and a theorem of Manin \cite{Manin} and Drinfeld \cite{Drinfeld}.

\begin{lem}[Specialization Lemma]\label{Katz} Let $K$ be a number field. Let $\wp\subset\mathcal{O}_K$ be a prime above $p$. Let $A/K$ be an abelian variety. Suppose the ramification index $e_\wp(K/\mathbb{Q})<p-1$. Then the reduction map
$$\Psi: A(K)_{tor}\longrightarrow A(\overline{\mathbb{F}}_p)$$
is injective.
\end{lem}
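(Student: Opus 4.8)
The plan is to pass to the completion at $\wp$, identify the kernel of the reduction map with the $\mathfrak{m}$-valued points of a formal group, and show that the ramification hypothesis forces that formal group to be torsion-free.

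First I would localize. Let $\mathcal{O}_\wp$ be the completion of $\mathcal{O}_K$ at $\wp$, with maximal ideal $\mathfrak{m}$, uniformizer $\pi$, fraction field $K_\wp$ and residue field $k$, and let $v$ be the valuation normalized by $v(\pi)=1$, so that $v(p)=e:=e_\wp(K/\mathbb{Q})$. Since $J(K)_{tor}\subseteq J(K_\wp)_{tor}$ and $\Psi$ factors as $J(K)_{tor}\to\widetilde{J}(k)\hookrightarrow J(\overline{\mathbb{F}}_p)$, where $\widetilde{J}$ is the special fibre of the N\'eron model $\mathcal{J}/\mathcal{O}_\wp$, it suffices to show that the reduction $\rho:J(K_\wp)\to\widetilde{J}(k)$ is injective on torsion, i.e. that $\ker\rho$ is torsion-free. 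The standard identification is $\ker\rho=\widehat{\mathcal{J}}(\mathfrak{m})$, the group of $\mathfrak{m}$-valued points of the formal group $\widehat{\mathcal{J}}$ obtained by completing $\mathcal{J}$ along its identity section; this is a formal group of dimension $g=\dim J$ over $\mathcal{O}_\wp$, and the identification needs no good-reduction hypothesis, because any point of $\ker\rho$ reduces into the identity component of $\widetilde{J}$.

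Next I would prove that $\widehat{\mathcal{J}}(\mathfrak{m})$ is torsion-free. For a prime $\ell\neq p$ the multiplication map $[\ell]$ on $\widehat{\mathcal{J}}$ has linear coefficient $\ell\in\mathcal{O}_\wp^\times$, hence is an automorphism of the formal group, so there is no $\ell$-torsion. The whole point is the $p$-primary part, and this is where $e<p-1$ enters. Writing $\log$ and $\exp$ for the formal logarithm and exponential of $\widehat{\mathcal{J}}$, the coefficients of $\exp$ carry denominators divisible by $n!$, and since $v(n!)=e\cdot\frac{n-s_p(n)}{p-1}\le\frac{e(n-1)}{p-1}$ (with $s_p(n)$ the sum of base-$p$ digits), both series converge and are mutually inverse isomorphisms $\widehat{\mathcal{J}}(\mathfrak{m}^r)\xrightarrow{\ \sim\ }\mathfrak{m}^{\oplus g}$ onto the additive group as soon as $r>\frac{e}{p-1}$. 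Because $e<p-1$ we have $\frac{e}{p-1}<1$, so every nonzero $x\in\mathfrak{m}$ satisfies $v(x)\ge1>\frac{e}{p-1}$; taking $r=1$ yields $\widehat{\mathcal{J}}(\mathfrak{m})\cong\mathfrak{m}^{\oplus g}$, which is torsion-free since $K_\wp$ has characteristic $0$.

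Finally, the torsion-freeness of $\ker\rho=\widehat{\mathcal{J}}(\mathfrak{m})$ gives $\ker\rho\cap J(K_\wp)_{tor}=0$, so $\rho$, and therefore $\Psi$, are injective on torsion. The main obstacle is the convergence estimate of the middle paragraph: one must pin down that $\exp_{\widehat{\mathcal{J}}}$ converges on $\mathfrak{m}^r$ precisely when $r>e/(p-1)$, equivalently the direct estimate that a nonzero $p$-power torsion point of the formal group must satisfy $v(x)\le e/(p-1)$ — which is incompatible with $v(x)\ge1$ once $e<p-1$. Everything else in the argument is formal.
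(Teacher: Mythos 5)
The paper offers no proof of this lemma: it is stated as a quotation of the specialization result in Katz \cite{Katz}, so there is nothing internal to compare against. Your argument is correct and is in substance the standard (and Katz's own) proof: the kernel of $J(K_\wp)\to\widetilde{J}(k)$ is the group $\widehat{\mathcal{J}}(\mathfrak{m})$ of $\mathfrak{m}$-points of the formal group of the N\'eron model (no good-reduction hypothesis needed, as you note, and this also supplies the correct reading of ``$J(\overline{\mathbb{F}}_p)$'' as points of the special fibre of the N\'eron model); prime-to-$p$ torsion dies because $[\ell]$ has unit linear term and is therefore a formal-group automorphism; and $p$-power torsion dies because the formal exponential and logarithm give mutually inverse isomorphisms $\widehat{\mathcal{J}}(\mathfrak{m}^r)\cong(\mathfrak{m}^r)^{\oplus g}$ for $r>e/(p-1)$, which applies with $r=1$ exactly when $e<p-1$, exhibiting $\widehat{\mathcal{J}}(\mathfrak{m})$ as a torsion-free $\mathbb{Z}_p$-module inside a characteristic-zero field. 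Your valuation estimate $v(n!)=e(n-s_p(n))/(p-1)\le e(n-1)/(p-1)$ is the right convergence bound, and your closing remark correctly identifies the equivalent, more elementary route (a nonzero $p$-torsion point of a formal group over $\mathcal{O}_\wp$ must have $v(x)\le e/(p-1)<1$, contradicting $x\in\mathfrak{m}$), which is the form of the argument one finds for elliptic curves in Silverman. I see no gap.
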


\begin{thm}[Manin-Drinfeld]\label{ManinDrinfeld} Let $C\subset SL_2(\mathbb{Z})/(\pm1)$ be a congruence subgroup. $x,y\in\mathbb{P}^1(\mathbb{Q})$ and $\overline{x},\overline{y}$ the images of $x$ and $y$ respectively, on $\overline{\mathbb{H}}/C$. Then the class of divisors $(\overline{x})-(\overline{y})$ on curve $\overline{\mathbb{H}}/C$ has finite order.
\end{thm}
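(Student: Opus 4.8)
The plan is to play the Hecke operators on the jacobian $J=J_X$ of $X=\overline{\mathbb{H}}/C$ against the arithmetic of its cotangent space: on weight-two cusp forms the Hecke eigenvalues are bounded by $2\sqrt{\ell}$, hence are too small to equal the Eisenstein eigenvalue $\ell+1$, whereas on divisors supported at the cusps the operator $T_\ell$ acts by the scalar $\ell+1$. The class $[(\overline{x})-(\overline{y})]$ is cuspidal, so it will be killed by $T_\ell-(\ell+1)$; but this operator is an isogeny, and a point in the kernel of an isogeny is necessarily torsion.

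First I would reduce to a standard congruence subgroup. As $C$ is a congruence subgroup it contains some principal congruence subgroup $\Gamma(N)$, giving a finite covering $\pi\colon \overline{\mathbb{H}}/\Gamma(N)\to X$. The pullback $\pi^{*}\bigl((\overline{x})-(\overline{y})\bigr)$ is again a degree-zero divisor supported on cusps, and since $\pi_{*}\pi^{*}=\deg(\pi)$ on jacobians, torsionness of the pulled-back class upstairs forces $\deg(\pi)\cdot[(\overline{x})-(\overline{y})]$, and hence $[(\overline{x})-(\overline{y})]$ itself, to be torsion. Thus it suffices to treat $X=\overline{\mathbb{H}}/\Gamma(N)$, where for each prime $\ell\nmid N$ the Hecke correspondence $T_\ell$ is available, induces an endomorphism of $J$, and preserves the finite set of cusps.

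Next I would compare the two actions of $T_\ell$. On the cotangent space $H^{0}(X,\Omega^{1})\cong S_2(\Gamma(N))$ the operator $T_\ell$ acts as the classical Hecke operator on weight-two cusp forms; by Eichler--Shimura its eigenvalues $a_\ell$ are traces of Frobenius on reductions of the associated abelian variety, so the Hasse bound recorded above yields $|a_\ell|\le 2\sqrt{\ell}$. Because $2\sqrt{\ell}<\ell+1$ for every prime $\ell$ (the difference being $(\sqrt{\ell}-1)^{2}>0$), the endomorphism $\eta_\ell:=T_\ell-(\ell+1)$ acts invertibly on $S_2(\Gamma(N))$; an endomorphism of $J$ that is invertible on the cotangent space has finite kernel, so it is an isogeny and $\ker(\eta_\ell)\subset J$ is finite. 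On the other hand $T_\ell$ acts on the group of cuspidal divisor classes, and the decisive point is that this action is multiplication by $\ell+1$, i.e. $\eta_\ell$ annihilates $[(\overline{x})-(\overline{y})]$. Granting this, $[(\overline{x})-(\overline{y})]$ lies in the finite group $\ker(\eta_\ell)$ and is therefore torsion.

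The main obstacle is exactly this last assertion: that $T_\ell$ scales cuspidal classes by $\ell+1$. Concretely one must write the Hecke correspondence explicitly on the cusps and verify that the excess $T_\ell(\overline{c})-(\ell+1)(\overline{c})$ is the divisor of a function for every cusp $\overline{c}$; equivalently, in Manin's analytic formulation, one shows that the Abel--Jacobi image of a degree-zero cuspidal divisor is a rational point of the period torus, which follows from the rationality of the periods of weight-two Eisenstein series. This is the genuine arithmetic input that separates the Eisenstein (cuspidal) part of $J$ from its cusp-form part; once it is granted, the numerical gap $2\sqrt{\ell}<\ell+1$ supplied by Hasse's theorem finishes the argument.
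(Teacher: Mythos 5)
The paper itself offers no proof of this theorem --- it is quoted from Manin and Drinfeld with a citation --- so there is no internal argument to compare against; what follows is an assessment of your proposal on its own terms. The outer layers of your Hecke-operator strategy are sound: the reduction to $\Gamma(N)$ via $\pi_{*}\pi^{*}=\deg(\pi)$ is correct, and so is the assertion that $\eta_\ell=T_\ell-(\ell+1)$ is an isogeny of $J$, since Eichler--Shimura together with the Weil bound gives $|a_\ell|\le 2\sqrt{\ell}<\ell+1$ for every eigenvalue of $T_\ell$ on weight-two cusp forms, and an endomorphism invertible on the cotangent space has finite kernel in characteristic zero.

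The gap is the step you yourself flag as the ``main obstacle,'' and it is worse than an omitted verification: the claim that $T_\ell$ acts on cuspidal divisors by the scalar $\ell+1$ is false for a general congruence subgroup, and in particular for the group $\Gamma(N)$ to which you have just reduced. The Hecke correspondence genuinely permutes the cusps of $X(N)$: for $\ell\equiv 1\bmod N$, applying $T_\ell$ to the cusp $0=(0:1)$ produces an effective divisor of degree $\ell+1$ spread over all the cusps $(j:1)$, $j\in\mathbb{Z}/N\mathbb{Z}$, not $(\ell+1)$ times the cusp $0$. The identity $T_\ell(c)=(\ell+1)(c)$ holds on the nose only for special cusps such as $0$ and $\infty$ on $X_0(N)$, which is why the simple form of the argument suffices there but not here. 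Your fallback formulation --- that the excess $T_\ell(\overline{c})-(\ell+1)(\overline{c})$ is the divisor of a function --- asks for principality of a nonzero degree-zero cuspidal divisor, which is a statement of exactly the same nature as (indeed stronger than) the theorem being proved, so it cannot be disposed of by ``writing the Hecke correspondence explicitly on the cusps.'' What is actually needed is a lower bound separating the eigenvalues of $T_\ell$ on $\mathrm{Div}^0_{\mathrm{cusp}}\otimes\mathbb{Q}$ from the cusp-form eigenvalues --- for instance by identifying the cuspidal boundary with the weight-two Eisenstein part, whose $T_\ell$-eigenvalues $\chi_1(\ell)+\ell\chi_2(\ell)$ have absolute value at least $\ell-1>2\sqrt{\ell}$ for $\ell\ge 7$ --- followed by a Cayley--Hamilton argument applied to the characteristic polynomial of $T_\ell$ on the finitely generated cuspidal group, rather than to $\eta_\ell$ alone. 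That eigenvalue separation (equivalently, Manin's rationality of the periods of weight-two Eisenstein series) is the real content of the theorem, and it is precisely what your proposal leaves unproved.
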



\section{Method}\label{method}

When $N$ is a rational prime number, Kamienny \cite{Kamienny2} established an criterion for the nonexistence of elliptic curves $E$ with a point of order $N$ over a number field of degree $d$. This criterion is refined by Merel \cite{Merel} in which the Eisenstein quotient is replaced by the winding quotient and the linear independence condition of weight-two cusp forms is replaced by the linear independence of the Hecke operators on the winding element. This type of Kamienny's criterion for the general $N$ is proved by Parent \cite{Parent1}.(In Parent's paper, he assumed $N$ to be a prime power for practical reason. But as he mentioned on page 86, Th\'{e}or\`{e}me 1.7 and the Kamienny's criterion Th\'{e}or\`{e}me 1.8 are also true by taking directly at any positive integer level $N$.) Before giving this criterion, we have to explain the necessary knowledge.

Considering the first absolute singular homology group $H_1(X_0(N);\mathbb{Z})$ and the homology group relative to the cusps $H_1(X_0(N),cusps;\mathbb{Z})$ of $X_0(N)$, the first being seen as a subgroup of the second. For $(\alpha,\beta)\in\mathbb{P}^1(\mathbb{Q})^2$, the \emph{modular symbol} $\{\alpha, \beta\}$ is the element of $H_1(X_0(N),cusps;\mathbb{Z})$ defined by the image in $X_0(N)$ of geodesic path of $\mathbb{H}$ connecting $\alpha$ to $\beta$ in $\mathbb{H}\cup\mathbb{P}^1(\mathbb{Q})$. When $\Gamma_0(N)\alpha=\Gamma_0(N)\beta$, we have $\{\alpha,\beta\}\in H_1(X_0(N);\mathbb{Z})$. Integration defines a classical isomorphism of real vector spaces:
$$\aligned H_1(X_0(N);\mathbb{Z})\otimes\mathbb{R}&\longrightarrow \Hom_\mathbb{C}(H^0(X_0(N);\Omega^1),\mathbb{C})
\\\gamma\otimes1&\longmapsto(\omega\longmapsto\int_\gamma\omega)\endaligned$$
The following lemma is a generalization of Lemma 18.6 in Mazur \cite{Mazur}.

\begin{lem}\label{Mazur} The inverse image $e$ of the linear form
$$\omega\longmapsto\int_{\{0,\infty\}}\omega$$
in $H_1(X_0(N);\mathbb{Z})\otimes\mathbb{R}$ is actually in $H_1(X_0(N);\mathbb{Z})\otimes\mathbb{Q}$.
\end{lem}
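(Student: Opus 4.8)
The plan is to reduce the statement to the Manin--Drinfeld theorem (Theorem \ref{ManinDrinfeld}) via the analytic uniformization of the Jacobian. First I would recall that the integration pairing presents $J_0(N)$ as the complex torus
$$J_0(N)(\mathbb{C})=\Hom_\mathbb{C}(H^0(X_0(N);\Omega^1),\mathbb{C})/H_1(X_0(N);\mathbb{Z}),$$
where $H_1(X_0(N);\mathbb{Z})$ sits inside the $\mathbb{C}$-dual of the holomorphic differentials as the lattice $\gamma\longmapsto(\omega\longmapsto\int_\gamma\omega)$. Under this embedding the real span $H_1(X_0(N);\mathbb{Z})\otimes\mathbb{R}$ is exactly $\Hom_\mathbb{C}(H^0(X_0(N);\Omega^1),\mathbb{C})$, which is the isomorphism already recorded in the excerpt; here one uses that $H_1$ has rank $2g$ while the $\mathbb{C}$-dual has real dimension $2g$. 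Thus $e$ is the unique vector of this real space represented by the $\mathbb{C}$-linear functional $\omega\longmapsto\int_{\{0,\infty\}}\omega$, and the content of the lemma is that $e$ lies in the $\mathbb{Q}$-span of the integral lattice.

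The key observation is that this functional is nothing but the Abel--Jacobi image of the cuspidal divisor $(\infty)-(0)$. Indeed, the modular symbol $\{0,\infty\}$ has boundary $(\infty)-(0)$, and for a holomorphic differential $\omega$ the period $\int_{\{0,\infty\}}\omega=\int_0^\infty\omega$ is precisely the value at $\omega$ of the functional attached to the degree-zero divisor $(\infty)-(0)$ under the Abel--Jacobi map composed with the uniformization above. Hence the class of $e$ modulo $H_1(X_0(N);\mathbb{Z})$ equals the class of $(\infty)-(0)$ in $J_0(N)(\mathbb{C})$.

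Now I would invoke Manin--Drinfeld. Applying Theorem \ref{ManinDrinfeld} with the congruence subgroup $C=\Gamma_0(N)$ and the two cusps $x=\infty$, $y=0$ of $\mathbb{P}^1(\mathbb{Q})$, the divisor class $(\infty)-(0)$ has finite order in $J_0(N)$. Choose a positive integer $n$ with $n\cdot[(\infty)-(0)]=0$ in $J_0(N)(\mathbb{C})$. Translating back through the uniformization, this says that the functional $n\cdot(\omega\longmapsto\int_{\{0,\infty\}}\omega)$ lies in the integral lattice: there is a cycle $\gamma\in H_1(X_0(N);\mathbb{Z})$ with $n\int_{\{0,\infty\}}\omega=\int_\gamma\omega$ for every $\omega\in H^0(X_0(N);\Omega^1)$. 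Hence $n\,e=\gamma\in H_1(X_0(N);\mathbb{Z})$, so that $e=\tfrac1n\gamma\in H_1(X_0(N);\mathbb{Z})\otimes\mathbb{Q}$, as desired.

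The only delicate point is the identification in the second paragraph between the period of the modular symbol $\{0,\infty\}$ and the Abel--Jacobi image of its boundary. This is the standard compatibility between the relative homology exact sequence
$$0\longrightarrow H_1(X_0(N);\mathbb{Z})\longrightarrow H_1(X_0(N),cusps;\mathbb{Z})\stackrel{\partial}{\longrightarrow}\widetilde{H}_0(cusps;\mathbb{Z})\longrightarrow0$$
and the analytic description of $J_0(N)$: the connecting map $\partial$ sends $\{0,\infty\}$ to $(\infty)-(0)$, and the period functional of a relative cycle computes the Abel--Jacobi class of its boundary. Once this compatibility is in place, the torsion conclusion of Manin--Drinfeld transfers directly to the rationality of $e$, and everything else is formal linear algebra over the lattice $H_1(X_0(N);\mathbb{Z})$.
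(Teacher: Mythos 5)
Your proof is correct and follows essentially the same route as the paper: identify $H_1(X_0(N);\mathbb{Z})\otimes\mathbb{R}$ with the universal cover of $J_0(N)$, observe that $e$ maps to the class of the degree-zero cuspidal divisor supported on $0$ and $\infty$, and invoke Manin--Drinfeld to conclude that $n\,e$ lies in the integral lattice for some $n$. The extra care you take with the boundary map $\partial\{0,\infty\}=(\infty)-(0)$ and the Abel--Jacobi compatibility is a welcome clarification of a step the paper treats as immediate.
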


\begin{proof} Consider the exact sequence of topological groups:
$$\xymatrix{0\ar[r]& H_1(X_0(N);\mathbb{Z})\ar[r]& U\ar[r]^{\pi~~~~~} &J_0(N)\ar[r]&0}$$
where $U$ is the universal covering group of the jacobian $J_0(N)$ of $X_0(N)$. As a real Lie group, $U$ is isomorphic to $H_1(X_0(N);\mathbb{Z})\otimes\mathbb{R}$ and $J_0(N)$ is canonically isomorphic to $H_1(X_0(N);\mathbb{Z})\otimes(\mathbb{R}/\mathbb{Z})$.  From the definition, it is clear that $\pi(e)=c=((0)-(\infty))$ in $J_0(N)$. By Theorem \ref{ManinDrinfeld}, $c$ has finite order, i.e. there is $n\in\mathbb{Z}_{\geq0}$ such that $n\cdot c=0$. It follows that $n\cdot e\in H_1(X_0(N);\mathbb{Z})$. So $e\in (1/n)H_1(X_0(N);\mathbb{Z})\subset H_1(X_0(N);\mathbb{Z})\otimes\mathbb{Q}$.
\end{proof}

This element $e$ in Lemma \ref{Mazur} was first defined by Mazur \cite{Mazur} as the \emph{winding element}. Denote $\mathbb{T}$ the algebra generated over $\mathbb{Z}$
by the Hecke operators $T_i$ ($i\geq1$, integer), acting faithfully on $H_1(X_0(N);\mathbb{Z})\otimes\mathbb{Q}$ and on the Jacobian $J_0(N)$ of the modular curve. Let $\mathscr{A}_e$ be the annihilator ideal of $e$ in $\mathbb{T}$; we then define the \emph{winding quotient} $J_0^e$ as the quotient abelian variety $J_0(N)/\mathscr{A}_eJ_0(N)$. Parent\cite[Theorem 1.7]{Parent1} showed that $J_0^e(\mathbb{Q})$ is finite.

The notion of \emph{formal immersion} was introduced by Mazur \cite{Mazur78} to indicate the morphism that satisfies the equivalent conditions of EGA IV Proposition 17.4.4. \cite{Grothendieck}. If $f:X\longrightarrow Y$ is a morphism of finite type between noetherian schemes, we shall say that $f$ is a \emph{formal immersion} at a point $x$ if the induced map on the completions of local rings $\widehat{f}^\sharp: \widehat{\mathcal{O}}_{Y,f(x)}\longrightarrow\widehat{\mathcal{O}}_{X,x}$ is surjective.

The following Lemma was known for experts but used without proof. Parent \cite{Parent1} refered it to an unpublished paper of Oesterl\'e \cite{Oesterle}. Arnold sketched a proof in a note \cite{Arnold}. For the sake of completeness, we write down his proof with more detailed clarification.

\begin{lem} \label{Oesterle} Suppose that $X$ is separated and that $f: X\longrightarrow Y$ is a formal
immersion at $x\in X$. Suppose that there is an integral Noetherian scheme $T$
and two $T$-valued points $p_1, p_2\in X(T)$ such that for some point $t\in T$ we have
$x=p_1(t)=p_2(t)$. If moreover $f\circ p_1=f\circ p_2$, then $p_1=p_2$.
\end{lem}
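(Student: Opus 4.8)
The plan is to exploit the \emph{formal immersion} hypothesis, which by definition says that the induced map on completed local rings $\widehat{f}^\sharp:\widehat{\mathcal{O}}_{Y,f(x)}\longrightarrow\widehat{\mathcal{O}}_{X,x}$ is surjective. First I would reduce to an infinitesimal comparison of the two sections near the common point $t\in T$. The two $T$-valued points $p_1,p_2$ agree at $t$, and after composing with $f$ they become equal; the goal is to upgrade agreement-after-$f$ to genuine equality. Because $X$ is separated, the locus where $p_1$ and $p_2$ agree is closed in $T$ (it is the preimage of the diagonal under $(p_1,p_2):T\to X\times X$), so it suffices to show that $p_1=p_2$ holds on a neighborhood, or equivalently that the two sections induce the same map on the completed local ring $\widehat{\mathcal{O}}_{T,t}$.

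The key step is to translate everything into ring maps at the point $t$. Write $A=\widehat{\mathcal{O}}_{T,t}$; each $p_j$ gives a local homomorphism $p_j^\sharp:\widehat{\mathcal{O}}_{X,x}\longrightarrow A$, and since $p_1,p_2$ have the same underlying point $x$ at $t$, both are local maps with the same source. The hypothesis $f\circ p_1=f\circ p_2$ says precisely that $p_1^\sharp\circ\widehat{f}^\sharp=p_2^\sharp\circ\widehat{f}^\sharp$ as maps $\widehat{\mathcal{O}}_{Y,f(x)}\longrightarrow A$. Now I would invoke surjectivity of $\widehat{f}^\sharp$: since every element of $\widehat{\mathcal{O}}_{X,x}$ is in the image of $\widehat{f}^\sharp$, and $p_1^\sharp,p_2^\sharp$ agree after precomposition with $\widehat{f}^\sharp$, they must agree on all of $\widehat{\mathcal{O}}_{X,x}$. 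Hence $p_1^\sharp=p_2^\sharp$ as ring maps, which means $p_1$ and $p_2$ induce the same morphism $\Spec A\to X$.

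It then remains to pass from equality of the induced maps on completed local rings back to equality of the actual sections $p_1,p_2$ on $T$. Here is where the integrality and Noetherian hypotheses on $T$, together with separatedness of $X$, do their work: the agreement locus $Z\subset T$ of $p_1$ and $p_2$ is closed, and the computation above shows it contains the full formal neighborhood $\Spec\widehat{\mathcal{O}}_{T,t}$ of $t$. In particular $Z$ contains the generic point of $T$ because $T$ is integral and $Z$ meets every neighborhood of $t$ in a dense set; since $Z$ is closed and contains the generic point of the irreducible scheme $T$, we conclude $Z=T$, that is, $p_1=p_2$.

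I expect the main obstacle to be the last transition: arguing cleanly that equality on the completed local ring at a single point $t$ forces equality of the global sections. This is exactly the place where one must use that $T$ is integral (so that it is irreducible with a generic point whose closure is all of $T$) and that $X$ is separated (so that the equalizer of $p_1,p_2$ is closed, allowing a density argument to conclude). The infinitesimal part of the argument — surjectivity of $\widehat{f}^\sharp$ forcing $p_1^\sharp=p_2^\sharp$ — is essentially formal; the care is all in globalizing, and in checking that the closed agreement locus genuinely contains the generic point rather than merely the formal neighborhood of $t$.
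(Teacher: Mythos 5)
Your overall strategy coincides with the paper's: separatedness makes the agreement locus $Z=T\times_{X\times X}X$ closed in $T$, the surjectivity of $\widehat{f}^\sharp$ together with $f\circ p_1=f\circ p_2$ forces $\widehat{p}_1^\sharp=\widehat{p}_2^\sharp$ on completed local rings, and the integral Noetherian hypothesis on $T$ is used to promote agreement near $t$ to $Z=T$. The infinitesimal step is carried out correctly and is exactly the paper's.

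The one place where your justification does not hold up as written is the final transition. The claim that ``$Z$ meets every neighborhood of $t$ in a dense set'' because it contains the formal neighborhood is not a Zariski-topological statement, and by itself it does not put the generic point of $T$ into $Z$. What is needed is one of two standard facts, both resting on $\mathcal{O}_{T,t}$ being a Noetherian local \emph{domain} (this is where integrality is used beyond mere irreducibility): either (i) the completion map $\sigma_{T,t}\colon\mathcal{O}_{T,t}\to\widehat{\mathcal{O}}_{T,t}$ is injective by the Krull intersection theorem ($\bigcap_n\mathfrak{m}_{T,t}^n=0$), so $\widehat{p}_1^\sharp=\widehat{p}_2^\sharp$ already forces $p_1^\sharp=p_2^\sharp\colon\mathcal{O}_{X,x}\to\mathcal{O}_{T,t}$, whence $\Spec\mathcal{O}_{T,t}\to T$ factors through $Z$ and $Z$ contains the generic point of $T$ (which lies in $\Spec\mathcal{O}_{T,t}$); or (ii) $\Spec\widehat{\mathcal{O}}_{T,t}\to\Spec\mathcal{O}_{T,t}$ is surjective by faithful flatness of completion, so the image of the formal neighborhood in $T$ already contains the generic point. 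The paper takes route (i). With either substitution your argument closes; without one of them, the step from ``agreement on $\widehat{\mathcal{O}}_{T,t}$'' to ``the generic point lies in $Z$'' is a genuine gap. You should also record, as the paper does, that the $p_i$ restricted to $\Spec\mathcal{O}_{T,t}$ factor through $\Spec\mathcal{O}_{X,x}$, so that the comparison of local homomorphisms is well posed.
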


\begin{proof} The subscheme $A=\{s\in T ~|~ p_1(s)=p_2(s)\}\subseteq T$ is closed since $X$ is
separated. This is because in the following diagram $i$ is a base change of $\Delta$. So $i$ is a closed immersion since $\Delta$ is a closed immersion and the property of closed immersion is stable under base change.
$$\xymatrix {A=X\times_{X\times X}T\ar[r]^{~~~~~~~~~~i}\ar[d]&T\ar[d]^{p_1\times p_2}\\
X\ar[r]^\Delta&X\times X }
$$

We consider the canonical morphisms \cite[EGA I, \S 2.4]{Grothendieck1}
$$\phi_{T,t}: \Spec\mathcal{O}_{T,t}\longrightarrow T,~~~~~~~~\phi_{X,x}: \Spec\mathcal{O}_{X,x}\longrightarrow X.$$
By \cite[EGA I, Proposition 2.4.2]{Grothendieck1}, they are monomorphisms of ringed spaces. The image of $\phi_{T,t}$ (resp. $\phi_{X,x}$) is exactly the set of all those generic points of the closed irreducible subschemes of $T$ (resp. $X$) passing through $t$ (resp. $x$).

Since $T$ is integral, then $T=\overline{(0)}$, where $(0)$ is the unique generic point of $T$. So we will have an inclusion sequence
$$\overline{(0)}\subseteq\overline{\Spec\mathcal{O}_{T,t}}\subseteq A\subseteq T=\overline{(0)}$$
if we can show that $\Spec\mathcal{O}_{T,t}\longrightarrow T$ factors
through $A$. Hence we can assume that $T$ is local with closed point $t$. The maps
$p_i:T\longrightarrow X$ then factor (uniquely) through $\Spec\mathcal{O}_{X,x}\longrightarrow X$ , so we may assume
that $X$ is local with closed point $x$. Now we have the following commutative diagram
$$\xymatrix{&\Spec\mathcal{O}_{T,t}\ar[d]_{\phi_{T,t}}\ar@<-.5ex>[r]_{p_2}\ar@<.5ex>[r]^{p_1}&\Spec\mathcal{O}_{X,x}\ar[d]^{\phi_{X,x}}\\
A\ar[r]^{i}&T\ar@<-.5ex>[r]_{p_2}\ar@<.5ex>[r]^{p_1}&X}$$
In order to show that $p_1=p_2: T\rightrightarrows X$, it suffices to show that $p_1=p_2: \Spec\mathcal{O}_{T,t}\rightrightarrows\Spec\mathcal{O}_{X,x}$. This is equivalent to show that $p^\sharp_1=p^\sharp_2: \mathcal{O}_{X,x}\rightrightarrows\mathcal{O}_{T,t}$. Consider the commutative diagram
$$\xymatrix {\widehat{\mathcal{O}}_{Y,f(x)}\ar[r]^{\widehat{f}^\sharp}& \widehat{\mathcal{O}}_{X,x}\ar@<-.5ex>[r]_{\widehat{p}^\sharp_2}\ar@<.5ex>[r]^{\widehat{p}^\sharp_1}&\widehat{\mathcal{O}}_{T,t}\\
&\mathcal{O}_{X,x}\ar[u]^{\sigma_{X,x}}\ar@<-.5ex>[r]_{p^\sharp_2}\ar@<.5ex>[r]^{p^\sharp_1}&\mathcal{O}_{T,t}\ar[u]_{\sigma_{T,t}} }
$$
Since $T$ is an integral Noetherian scheme , then $\mathcal{O}_{T,t}$ is a Noetherian integral domain. So the rightmost map $\sigma_{T,t}:\mathcal{O}_{T,t}\longrightarrow\widehat{\mathcal{O}}_{T,t}$ is injective since $\ker (\sigma_{T,t})=\bigcap_n\mathfrak{m}_{T,t}^n=0$ by \cite[Theorem 8.10(ii)]{Matsumura}. Hence it will suffice to show that $\widehat{p}^\sharp_1=\widehat{p}^\sharp_2$. The condition $f\circ p_1=f\circ p_2$ implies
$\widehat{p}^\sharp_1\circ\widehat{f}^\sharp=\widehat{p}^\sharp_2\circ\widehat{f}^\sharp$. And $\widehat{f}^\sharp:\widehat{\mathcal{O}}_{Y,f(x)}\longrightarrow\widehat{\mathcal{O}}_{X,x}$ is surjective since $f$ is a formal immersion at $x$. Therefore $\widehat{p}^\sharp_1=\widehat{p}^\sharp_2$.
\end{proof}

Assume that $N$ is large enough so that $Gon(X_0(N))>d$. Then by Lemma \ref{Frey}, we may define an embedding $\Phi:X_0(N)^{(d)}\hookrightarrow J_0(N)$. We compose this with the natural projection $J_0(N)\longrightarrow J_0^e$ to obtain a map $f:X_0(N)^{(d)}\longrightarrow J_0^e$. Denote $S'=\Spec\mathbb{Z}[1/N]$. Since $X_0(N)$ is a smooth scheme over $S'$, then $X_0(N)^{(d)}$ is also a smooth scheme over $S'$. Since $J_0^e$ is an abelian variety over $\mathbb{Q}$, it has a N\'{e}ron model $J^e_{0/S'}$. We also use $f$ to denote the map $f:X_0(N)^{(d)}_{/S'}\longrightarrow J^e_{0/S'}$. Parent \cite{Parent1} proved the following Kamienny's criterion.

\begin{prop}[Kamienny's criterion] \label{Kamienny} Suppose $p>2$ and $p\nmid N$. The following (1) and (2) are equivalent. Furthermore, these two conditions are satisfied if (3) is true.
\begin{enumerate}
\item The map $f:X_0(N)^{(d)}_{/S'}\longrightarrow J^e_{0/S'}$ is a formal immersion along the section $(\infty,\cdots,\infty)$ in characteristic $p$.
\item $T_1e,\cdots,T_de$ are $\mathbb{F}_p$-linearly independent in $\mathbb{T}e/p\mathbb{T}e$.
\item $T_1\{0,\infty\},\cdots, T_{sd}\{0,\infty\}$ are $\mathbb{F}_p$-linearly independent in $H_1(X_0(N),cusps,\mathbb{Z})\otimes\mathbb{F}_p$ (here $s$ is the smallest prime number not dividing $N$).
\end{enumerate}

\end{prop}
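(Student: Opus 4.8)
The plan is to study $f$ through its action on cotangent spaces at the distinguished base point and to translate the resulting linear-algebra conditions into statements about the Hecke operators $T_n$ acting on the winding element $e$. Throughout I localize at $p$ and reduce modulo $p$; since $p>2$ and $p\nmid N$, the curve $X_0(N)$, hence the symmetric power $X_0(N)^{(d)}$, has good reduction over $S'=\Spec\mathbb{Z}[1/N]$, and $J^e_{0/S'}$ is the N\'eron model of an abelian variety, so it is smooth. Consequently both local rings, at $x=(\infty,\dots,\infty)$ and at the origin $0=f(x)$, are regular, and the cotangent spaces I use are free of the expected rank and commute with $\otimes\mathbb{F}_p$. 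By the very definition of formal immersion and Nakayama's lemma for complete local rings, the surjectivity of $\widehat{f}^\sharp:\widehat{\mathcal{O}}_{J^e_0,0}\to\widehat{\mathcal{O}}_{X_0(N)^{(d)},x}$ in characteristic $p$ is equivalent to the surjectivity of the induced cotangent map $f^\ast:\mathrm{Cot}_0(J_0^e)\otimes\mathbb{F}_p\to\mathrm{Cot}_x(X_0(N)^{(d)})\otimes\mathbb{F}_p$, equivalently to the injectivity of the dual tangent map $df$. This reduces condition (1) to a statement about this single linear map mod $p$.

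Next I would compute both sides explicitly. The invariant differentials on $J_0(N)$ are the weight-two cusp forms, $\mathrm{Cot}_0(J_0(N))=H^0(X_0(N),\Omega^1)=S_2(\Gamma_0(N))$, and since $J_0^e=J_0(N)/\mathscr{A}_eJ_0(N)$, the surjection $J_0(N)\to J_0^e$ identifies $\mathrm{Cot}_0(J_0^e)$ with the $\mathscr{A}_e$-annihilated part $S_2[\mathscr{A}_e]$. For the symmetric power I use the elementary symmetric functions $\sigma_1,\dots,\sigma_d$ of the local parameter $q$ at $\infty$ as a regular system of parameters at $x$; a direct $q$-expansion computation of the pullback of the differential attached to $g=\sum a_n(g)q^n$ shows that, in the basis $d\sigma_1,\dots,d\sigma_d$, the cotangent map is $g\mapsto\bigl(a_1(g),-a_2(g),\dots,(-1)^{d-1}a_d(g)\bigr)$. (Power-sum coordinates introduce spurious factors $1/n$, but these cancel against the Newton-identity change of basis to the $\sigma_j$, so the map is integral and reduces well mod $p$.) Using $a_n(g)=a_1(T_ng)$ and the pairing $(g,T)\mapsto a_1(Tg)$ between $S_2$ and $\mathbb{T}$, a nonzero covector $(c_n)$ kills the image precisely when $\sum_nc_nT_n$ pairs trivially with $S_2[\mathscr{A}_e]\otimes\mathbb{F}_p$, i.e.\ when $\sum_nc_nT_n\in\mathscr{A}_e$ mod $p$, i.e.\ when $\sum_nc_nT_ne=0$ in $\mathbb{T}e/p\mathbb{T}e$. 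Hence $f^\ast$ is surjective mod $p$ if and only if $T_1e,\dots,T_de$ are $\mathbb{F}_p$-linearly independent, which is exactly (2); this gives (1)$\Leftrightarrow$(2).

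For (3)$\Rightarrow$(2) I would pass from the relative homology $H_1(X_0(N),\mathrm{cusps};\mathbb{Z})$, where $\{0,\infty\}$ and its Hecke translates live and are explicitly computable, to the absolute homology containing $e$. By Lemma \ref{Mazur} the winding element is the projection of $\{0,\infty\}$ into $H_1(X_0(N);\mathbb{Z})\otimes\mathbb{Q}$, and the two homologies are related by the exact sequence $0\to H_1(X_0(N);\mathbb{Z})\to H_1(X_0(N),\mathrm{cusps};\mathbb{Z})\xrightarrow{\partial}\widetilde{H}_0(\mathrm{cusps};\mathbb{Z})\to 0$. A linear-algebra argument then shows that if the $2d$ classes $T_1\{0,\infty\},\dots,T_{2d}\{0,\infty\}$ are independent mod $p$ in relative homology, the boundary contributions governed by $\partial$ can be absorbed and one extracts $d$ independent classes among $T_1e,\dots,T_de$ mod $p$; the doubling from $d$ to $2d$ is precisely what compensates for the extra rank coming from the cuspidal boundary. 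This yields (2), hence also (1).

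The main obstacle is the second step: one must verify that the cotangent identification and the $S_2$–$\mathbb{T}$ duality persist after reduction modulo $p$, so that the elementary char-$0$ statement ``$T_ne$ independent over $\mathbb{Q}$'' genuinely upgrades to the mod-$p$ statement. This is where the hypotheses $p\nmid N$ (good reduction of $X_0(N)$, so $X_0(N)^{(d)}$ is smooth over $S'$ and the $q$-expansion principle applies integrally) and $p>2$ are essential, together with the integrality afforded by the $\sigma_j$-coordinates. Establishing the correct integral structures and the perfectness of the \emph{reduced} Hecke pairing---rather than any single explicit computation---is the crux of the argument.
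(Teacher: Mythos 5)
The paper does not actually prove this proposition: it is quoted from Parent \cite{Parent1} (where it is Th\'eor\`eme 1.8 together with the surrounding propositions), so there is no internal argument to measure you against. Judged on its own, your sketch of the equivalence $(1)\Leftrightarrow(2)$ is the standard Kamienny--Parent route and is essentially right in outline: reduce formal immersion to surjectivity of the cotangent map mod $p$, identify $\mathrm{Cot}_0(J_0^e)$ with $S_2[\mathscr{A}_e]$, compute the pullback to $X_0(N)^{(d)}$ at $(\infty,\dots,\infty)$ as the first $d$ Fourier coefficients, and dualize via the pairing $(g,T)\mapsto a_1(Tg)$. You correctly flag that the substance lies in making the cotangent identification and the Hecke duality work integrally and mod $p$ (this is where $p>2$ and $p\nmid N$ enter, via the N\'eron model of the optimal quotient); that part is deferred but at least accurately located.

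The genuine gap is the implication $(3)\Rightarrow(2)$. ``A linear-algebra argument then shows\dots the boundary contributions governed by $\partial$ can be absorbed'' is not an argument, and the heuristic you offer for the factor $2d$ --- that it ``compensates for the extra rank coming from the cuspidal boundary'' --- cannot be the right mechanism: the boundary term $\widetilde{H}_0(\mathrm{cusps};\mathbb{Z})$ has rank $(\#\mathrm{cusps})-1$, which depends on $N$ and in general vastly exceeds $d$, so no rank count of that shape closes. The actual difficulty is different: $e$ lives only in $H_1(X_0(N);\mathbb{Z})\otimes\mathbb{Q}$, and a hypothetical relation $\sum_{n\le d}\lambda_nT_ne\in p\,\mathbb{T}e$ produces on the right-hand side an a priori unbounded integral combination $p\sum_m\mu_mT_me$; converting this into a mod-$p$ relation among the finitely many computable classes $T_n\{0,\infty\}$ in the (torsion-free, integral) relative homology requires controlling both the denominators of $e$ and the tail of that combination, which is done in Parent/Oesterl\'e using the multiplicative relations among the $T_n$ and the explicit Hecke action on the boundary of $\{0,\infty\}$ --- and it is precisely this bookkeeping that forces the passage from $d$ operators in condition $(2)$ to $2d$ operators in condition $(3)$. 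As written, your proposal asserts the conclusion of this step rather than proving it, and the justification it does offer points at the wrong obstruction.
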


In order to apply this criterion in our cases, we need the following Lemma \ref{General} and Lemma \ref{Squarefree}.

\begin{lem}\label{General} Let $N=q_1^{e_1}\cdots q_n^{e_n}$ be a positive integer with $q_1,\cdots, q_n$ distinct prime numbers. Let $p\nmid N$ be a prime number with $N>(1+\sqrt{p^d})^2$ and $q_j^{e_j}\nmid p^{2i}-1$, for all $1\leq j\leq n$ and all $1\leq i\leq d$. Suppose that $E$ is an elliptic curve over a number field $K$ of degree $d$ with $P$ a $K$-rational point of order $N$, i.e. $(E,\pm P)\in Y_1(N)(K)$. Let $x=\pi(E,\pm P)$ be the projection of $(E,\pm P)$ on $Y_0(N)(K)$. Let $\wp$ be a prime of $\mathcal{O}_K$ above $p$ and let $k$ be the residue field of $\wp$. Then $x_{1/\tau_1(\wp)}=\cdots=x_{d/\tau_d(\wp)}=\infty_{/\wp}$.
\end{lem}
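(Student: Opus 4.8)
The plan is to extend the section $x$ to a morphism $\Spec\mathcal{O}_{K,\wp}\longrightarrow X_0(N)$ by properness, reduce modulo $\wp$, and identify the reduction by a case analysis on the reduction type of $E$ at $\wp$. Write $k=\mathbb{F}_{p^f}$ for the residue field, so $f=[k:\mathbb{F}_p]\le d$ and $p^f\le p^d$. Since $p\nmid N$, the kernel of reduction on $E(K_\wp)$ is the formal group, a pro-$p$ group, so reduction is injective on prime-to-$p$ torsion; in particular $\widetilde{P}$ is again a point of order $N$ in the special fibre. The same applies verbatim to each Galois conjugate $x_i$, so it suffices to treat one $x_i$, the argument being identical for all embeddings.

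First I would eliminate good reduction. If $E$ had good reduction, $\widetilde{E}$ would be an elliptic curve over $k$ containing $\widetilde{P}$ of order $N$, whence $N\mid|\widetilde{E}(k)|$; but Hasse's theorem gives $|\widetilde{E}(k)|\le(1+\sqrt{p^f})^2\le(1+\sqrt{p^d})^2<N$, a contradiction. Next I would eliminate additive reduction: by the Kodaira--N\'eron theorem $\widetilde{E}(k)^0\cong\mathbb{G}_{a/k}$ is a $p$-group and the component group $\widetilde{E}(k)/\widetilde{E}(k)^0$ has order at most $4$. As $\widetilde{P}$ has order $N$ prime to $p$, it injects into the component group, forcing $N\le 4$, against $N>(1+\sqrt{p^d})^2>4$. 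Hence $E$ has multiplicative reduction at $\wp$, and $x$ reduces to one of the cusps $(C_\delta,G)$ with $\delta\mid N$.

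It remains to pin the cusp down as $\infty=(C_N,\mathbb{Z}/N\mathbb{Z})$, equivalently to show that $\widetilde{C}$, the reduction of $C=\langle P\rangle$, injects into the component group, i.e. $\widetilde{C}\cap\widetilde{E}^0=0$. By the Kodaira--N\'eron theorem there is an extension $\mathscr{K}/k$ of degree at most two with $\widetilde{E}(\mathscr{K})^0\cong\mathbb{G}_{m/\mathscr{K}}$, so $\widetilde{E}^0(\mathscr{K})=\mathscr{K}^\times$ is cyclic of order dividing $p^{2f}-1$ (namely $p^f-1$ in the split case and $p^f+1$ in the nonsplit case, both dividing $p^{2f}-1$). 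Decompose $C=\bigoplus_j C_j$ into its $q_j$-primary parts and let $\widetilde{P_j}$ be the reduction of a generator of $C_j$, a point of order exactly $q_j^{e_j}$. The key mechanism is that a point of prime-power order $q_j^{e_j}$ must realise its full order either inside the torus $\widetilde{E}^0(\mathscr{K})$ or in the cyclic component group: if the image of $\widetilde{P_j}$ in the component group had order $q_j^{s_j}$ with $s_j<e_j$, then $\langle\widetilde{P_j}\rangle\cap\widetilde{E}^0$ would be a cyclic subgroup of $\mathscr{K}^\times$ of order $q_j^{e_j-s_j}$ generated by $K$-rational points of $C$, forcing a power of $q_j$ to divide $p^{2f}-1$; pushed to the limit $s_j=0$ this is exactly the divisibility $q_j^{e_j}\mid p^{2f}-1$ that the hypothesis $q_j^{e_j}\nmid p^{2i}-1$ (taken with $i=f\le d$) forbids. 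Running this for every $j$ yields $\widetilde{C}\cap\widetilde{E}^0=0$, so the cusp is $\infty$, which is the desired conclusion $x_{i/\wp}=\infty_{/\wp}$.

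The hard part is precisely this last step, that is, forcing \emph{each} primary part $C_j$ to degenerate \emph{entirely} into the component group rather than only partially (ruling out the intermediate cusps with $0<s_j<e_j$). This is where the full prime-power strength of the hypothesis $q_j^{e_j}\nmid p^{2i}-1$, rather than the weaker $q_j\nmid p^{2i}-1$, is indispensable, and where the structure of $\widetilde{E}(\mathscr{K})$ as an extension of the cyclic component group by the cyclic torus $\mathscr{K}^\times$ must be analysed carefully so that the order of $\widetilde{P_j}$ is genuinely ``all or nothing'' on each $q_j$-part. The two numerical features of the hypothesis reflect the two sources of loss: the factor $2$ in $p^{2i}-1$ accounts for the possible quadratic extension $\mathscr{K}/k$ needed to split the torus (split versus nonsplit multiplicative reduction), while the range $1\le i\le d$ accounts for the bound $f\le d$ on the residue degree of $\wp$.
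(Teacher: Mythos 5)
Your overall architecture matches the paper's: eliminate good reduction by Hasse, eliminate additive reduction via the component group, and then use the group structure of the special fibre at a cusp to force the N\'eron polygon to have all $N$ components. The reduction-type eliminations are fine. But the crux --- ruling out the intermediate cusps, i.e.\ showing that each $\widetilde{P_j}$ of order $q_j^{e_j}$ either maps to an element of full order in the component group or forces $q_j^{e_j}\mid p^{2f}-1$ --- is exactly the step you flag as ``the hard part'' that ``must be analysed carefully,'' and the analysis you actually supply does not close it. Your intersection computation gives only that $\langle\widetilde{P_j}\rangle\cap\widetilde{E}^0$ has order $q_j^{e_j-s_j}$, hence $q_j^{e_j-s_j}\mid p^{2f}-1$; for $0<s_j<e_j$ this is strictly weaker than $q_j^{e_j}\mid p^{2f}-1$ and is not excluded by the hypothesis $q_j^{e_j}\nmid p^{2i}-1$. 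So as written your argument derives a contradiction only when $s_j=0$, and the conclusion that ``running this for every $j$ yields $\widetilde{C}\cap\widetilde{E}^0=0$'' does not follow.

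The missing ingredient is that over $\mathscr{K}$ the smooth locus of the special fibre is the \emph{direct product} $\mathbb{G}_{m/\mathscr{K}}\times\mathbb{Z}/n\mathbb{Z}$ (N\'eron $n$-gon), not merely an extension of a cyclic group by a cyclic group. Writing $\widetilde{P_j}=(a,b)$, its order is $\mathrm{lcm}(\mathrm{ord}(a),\mathrm{ord}(b))=q_j^{e_j}$; since this is a prime power, one of the two coordinates must itself have order exactly $q_j^{e_j}$. If $\mathrm{ord}(b)=q_j^{s_j}$ with $s_j<e_j$, then necessarily $\mathrm{ord}(a)=q_j^{e_j}$, hence $q_j^{e_j}$ divides $|\mathscr{K}^\times|$, which divides $p^{2f}-1$ --- the contradiction you want, now for every $s_j<e_j$. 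This is precisely how the paper argues: it writes $P'=(P'_1,P'_2)$, notes that $q_j^{e_j-1}P'$ lands in the identity component so that $P'_2$ has order dividing $q_j^{e_j-1}$, and concludes that $P'_1$ must carry the full order $q_j^{e_j}$. With that one observation inserted your proof becomes the paper's proof; without it, the central case of the lemma is genuinely open in your write-up.
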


\begin{proof} Let $(\widetilde{E},\widetilde{P})$ be the reduction of $(E,P)$. It suffices to verify that $E$ has multiplicative reduction at $\wp$ and $\pi(E,\pm P)$ specialize to $\infty$.

If $E$  has good reduction at $\wp$, then $\widetilde{E}$ is an elliptic curve with a $k$-rational point $\widetilde{P}$ of order $N$. By the Hasse's theorem, $\widetilde{E}(k)$ has order at most $(1+\sqrt{p^d})^2$. This is impossible under our assumption of $N$.

If $E$ has additive reduction at $\wp$, then $\widetilde{E}(k)^0\cong\mathbb{G}_{a/k}$ with $|\mathbb{G}_{a/k}|=p^i, i\leq d$ and $\widetilde{E}(k)/\widetilde{E}(k)^0\cong G$ with $|G|\leq4$.
Since $\widetilde{P}$ is a $k$-rational point of order $N$ in $\widetilde{E}$, then $N$ divides $|\widetilde{E}(k)|=|\mathbb{G}_{a/k}||G|$, which is impossible under our assumption.

So $E$ has multiplicative reduction at $\wp$, then over an quadratic extension $\mathscr{K}$ of $k$, we have an isomorphism $\widetilde{E}(\mathscr{K})^0\cong\mathbb{G}_{m/\mathscr{K}}$.

Suppose $(E,P)$ specialize to $(C_n,(\zeta_N^r,b)))$ where $C_n$ is a N\'{e}ron $n$-gon with $n<N$  and $(\zeta_N^r,b)$ is a point of order $N$ in the smooth locus $C_n^{sm}=\mathbb{G}_{m/\mathscr{K}}\times\mathbb{Z}/n\mathbb{Z}$. Then the order of $b$ in $\mathbb{Z}/d\mathbb{Z}$ is $\leq n <N$. Therefore, for a prime $q_j|(N/n)$, one has that $(N/q_j) P$ specialize into the identity component $\widetilde{E}(\mathscr{K})^0\cong\mathbb{G}_{m/\mathscr{K}}$.

Now consider the point $P' := (N/q_j^{e_j}) P$, whose specialization is of order $q_j^{e_j}$ on $\mathbb{G}_{m/\mathscr{K}}\times \mathbb{Z}/n\mathbb{Z}$. Write $\widetilde{P}' = (\widetilde{P}'_1,\widetilde{P}'_2)$ with $\widetilde{P}'_1$ a point of $\mathbb{G}_{m/\mathscr{K}}$ and $\widetilde{P}'_2$ a point of  $\mathbb{Z}/n\mathbb{Z}$. The fact that $q_j^{e_j-1}P'=(N/q_j) P$ specializes into the identity component means that $\widetilde{P}'_2$ has order dividing $q_j^{e_j-1}$. So, the only possibility for $\widetilde{P}'$ to have order $q_j^{e_j}$ is then for $\widetilde{P}'_1$ to be of that order. So $q_j^{e_j}$ must divide the cardinality of $\mathscr{K}^*$, which itself must divide $p^{2i}-1$, where $i$ is the degree of $k$ over $\mathbb{F}_p$. This contradicts our assumption of $N$.

So $(E,P)$ must specialize to to $(C_n,(\zeta_N^r,b)))$ where $C_n$ is the N\'{e}ron $n$-gon with $n=N$  and $(\zeta_N^r,b)$ is a point of order $N$ in the smooth locus $C_N^{sm}=\mathbb{G}_{m/\mathscr{K}}\times\mathbb{Z}/N\mathbb{Z}$. Hence $\pi(E,\pm P)$ specialize to $\infty:=(C_N,\mathbb{Z}/N\mathbb{Z})$.
\end{proof}

\begin{thm}\label{T3} Let $N=q_1^{e_1}\cdots q_n^{e_n}$ be an odd positive integer such that $Gon(X_0(N))>d$. Suppose there is a prime $p\nmid N, p>2$ satisfying:
\begin{enumerate}
\item $N>(1+\sqrt{p^d})^2$ and $q_j^{e_j}\nmid p^{2i}-1$, for all $1\leq j\leq n$ and all $1\leq i\leq d$.
\item $T_1\{0,\infty\},\cdots, T_{2d}\{0,\infty\}$ are linearly independent mod $p$ in $H_1(X_0(N),cusps,\mathbb{Z})$.
\end{enumerate}
Then for any elliptic curve $E$ defined over a number field $K$ with $[K:\mathbb{Q}]=d$, the cyclic group $\mathbb{Z}/N\mathbb{Z}$ is not a subgroup of $E(K)_{tor}$.
\end{thm}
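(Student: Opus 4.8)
The plan is to argue by contradiction using the formal immersion method packaged in the preceding lemmas. Suppose $E/K$ has a $K$-rational point $P$ of order $N$, with $[K:\mathbb{Q}]=d$. Setting $C=\langle P\rangle$, the pair $x=(E,C)$ is a noncuspidal $K$-rational point of $X_0(N)$, and I would form the $\mathbb{Q}$-rational divisor $z=x_1+\cdots+x_d\in X_0(N)^{(d)}(\mathbb{Q})$, where $x_1,\dots,x_d$ are the conjugates of $x$ under the embeddings of $K$. The goal is to show $z=(\infty,\dots,\infty)$, which forces $x=\infty$ and contradicts that $x$ is noncuspidal.

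First I would fix the prime $p$ of the hypotheses and localize: set $T=\Spec\mathbb{Z}_{(p)}$, a scheme over $S'=\Spec\mathbb{Z}[1/N]$ since $p\nmid N$. Because $X_0(N)^{(d)}_{/S'}$ is proper, the $\mathbb{Q}$-point $z$ extends uniquely to a section $p_1\colon T\to X_0(N)^{(d)}_{/S'}$, and I let $p_2$ be the constant section at $(\infty,\dots,\infty)$. Condition (1) is exactly the hypothesis of Lemma \ref{General}, which tells me that every conjugate of $x$ specializes to the cusp $\infty$ at the prime above $p$; hence $p_1$ and $p_2$ agree at the closed point $t$ of $T$, i.e. $p_1(t)=p_2(t)=(\infty,\dots,\infty)$ in characteristic $p$.

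The crux is to show $f\circ p_1=f\circ p_2$, where $f\colon X_0(N)^{(d)}_{/S'}\to J^e_{0/S'}$ is the map built (using $\Gon(X_0(N))>d$ and Lemma \ref{Frey}) as the composite of $h$ with the projection to the winding quotient. At the generic fibre this amounts to proving $f(z)=0$. I would argue: $f(z)\in J^e_0(\mathbb{Q})$, which is finite by Parent's theorem, so $f(z)$ is torsion; since $f$ is an $S'$-morphism, its reduction at $t$ is $f(p_1(t))=f(\infty,\dots,\infty)=0$; and the specialization map on torsion is injective by Lemma \ref{Katz} (here $K=\mathbb{Q}$ has ramification index $1<p-1$ as $p>2$). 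Hence $f(z)=0=f(\infty,\dots,\infty)$, and since $J^e_{0/S'}$ is separated and $T$ is integral, the two sections $f\circ p_1$ and $f\circ p_2$, agreeing at the generic point, agree over all of $T$. Now condition (2) is precisely condition (3) of Kamienny's criterion (Proposition \ref{Kamienny}), which guarantees that $f$ is a formal immersion along $(\infty,\dots,\infty)$ in characteristic $p$. Applying Oesterl\'e's Lemma \ref{Oesterle} with $X=X_0(N)^{(d)}_{/S'}$ separated, $Y=J^e_{0/S'}$, and the two sections $p_1,p_2$ gives $p_1=p_2$, so $z=(\infty,\dots,\infty)$ and $x$ is a cusp --- the desired contradiction.

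The main obstacle I anticipate is the middle step, namely verifying $f(z)=0$ cleanly: it requires combining three inputs (finiteness of the winding quotient, the computation that $z$ reduces to the cusp via Lemma \ref{General}, and injectivity of the torsion specialization via Lemma \ref{Katz}), and then upgrading ``equal on the generic fibre'' to ``equal as $T$-sections'' using separatedness so that Lemma \ref{Oesterle} can be invoked. The bookkeeping of the reductions of the various conjugates at the primes above $p$ --- making precise that the symmetric-power point $z$ really specializes to $(\infty,\dots,\infty)$ --- is the place where I would be most careful.
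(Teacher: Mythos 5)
Your proposal is correct and follows essentially the same route as the paper's proof: Lemma \ref{General} to make the sections meet at $p$, finiteness of $J_0^e(\mathbb{Q})$ plus Lemma \ref{Katz} to force $f(x_1,\cdots,x_d)=f(\infty,\cdots,\infty)$, Kamienny's criterion (condition (3) $\Rightarrow$ (1)) for the formal immersion, and Lemma \ref{Oesterle} to conclude. Your write-up is in fact somewhat more careful than the paper's (e.g.\ spelling out the extension of $z$ to a $T$-section by properness and the use of separatedness), but the argument is the same.
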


\begin{proof} The proof of this theorem is in essence the same as that of Theorem 3.3 in Kamienny \cite{Kamienny2}. Suppose we have a number field $K$ with $[K:\mathbb{Q}]=d$ and an elliptic curve $E$ defined over $K$ such that $\mathbb{Z}/N\mathbb{Z}\subseteq E(K)_{tor}$. Take a generator $P$ of $\mathbb{Z}/N\mathbb{Z}$, then we have a noncuspidal point $x=\pi(E,\pm P)$ on $X_0(N)$ of degree $d$. By Lemma \ref{General}, we have that the $S$-sections $(x_1,\cdots,x_d)$ and $(\infty,\cdots,\infty)$ meet at the prime $p$. Consequently, we have that $f(x_1,\cdots,x_d)_{/p}=f(\infty,\cdots,\infty)_{/p}$. However, the points $f(x_1,\cdots,x_d)$ and $f(\infty,\cdots,\infty)$ are both $\mathbb{Q}$-rational. And we know $J_0^e(\mathbb{Q})$ is finite \cite[Theorem 1.7]{Parent1}. So by Lemma \ref{Katz}, the $S$-sections $f(x_1,\cdots,x_d)$ and $f(\infty,\cdots,\infty)$ coincide. And by Proposition \ref{Kamienny}, $f$ is a formal immersion at $(\infty,\cdots,\infty)_p$. Therefore by Lemma \ref{Oesterle}, the sections $(x_1,\cdots,x_d)$ and $(\infty,\cdots,\infty)$ coincide. This contradicts our assumption that $x$ is noncuspidal.
\end{proof}

In the special case when $N$ is square free, and $K$ is cubic, we can weaken the condition in Lemma \ref{General} and get the following:

\begin{lem}\label{Squarefree} Let $N$ be a square free positive integer with $g(X_0(N))>0$. Let $p\nmid N$ be a prime number with $N>(1+\sqrt{p^3})^2$ and $N$ is coprime with $p^{2}-1$. Suppose that $E$ is an elliptic curve over a cubic number field $K$ with $P$ a $K$-rational point of order $N$, i.e. $y=(E,\pm P)\in Y_1(N)(K)$. Let $x=\pi(E,\pm P)$ be the projection of $y$ on $Y_0(N)(K)$. Then there is  a prime $\wp$ of $\mathcal{O}_K$ above $p$ with residue field $k$, such that either $x_{1/\tau_1(\wp)}=\cdots=x_{3/\tau_3(\wp)}=\infty_{/\wp}$, or there is an
Atkin-Lehner involution $w_n$ on $X_0(N)$ with $w_n(x_1)_{/\tau_1(\wp)}=\cdots=w_n(x_3)_{/\tau_3(\wp)}=\infty_{/\wp}$.
\end{lem}

\begin{proof}
We can always choose $\wp$ such that the residue field $k=\mathcal{O}_K/\wp$ has degree $1$ or $3$ over $\mathbb{F}_p$. In fact, the decomposition of $p$ in $\mathcal{O}_K$ has the following five types
$$\aligned&I: &p\mathcal{O}_K&=\wp&&II: &p\mathcal{O}_K&=\wp^3& &III: &p\mathcal{O}_K&=\wp_1\wp_2&\\ &IV: &p\mathcal{O}_K&=\wp_1\wp_2^2&
 &V: &p\mathcal{O}_K&=\wp_1\wp_2\wp_3&\endaligned$$
In type $II,IV,V$, all the primes over $p$ have degree $1$ residue field. In type $I$, the prime over $p$ has degree $3$ residue field. In type $III$, the degree of the residue fields of the two primes $\wp_1,\wp_2$ is $1$ and $2$ respectively. We choose the one with degree $1$ residue field as $\wp$.

By the same reason as in the proof of Lemma \ref{General}, $E$ has multiplicative reduction at $\wp$. If the degree of $k$ over $\mathbb{F}_p$ is $1$, since we assume $N$ is coprime with $p^{2}-1$, then the same reasoning as that in the proof of Lemma \ref{General} leads to $x_{1/\tau_1(\wp)}=\cdots=x_{3/\tau_3(\wp)}=\infty_{/\wp}$.

If the degree of $k$ over $\mathbb{F}_p$ is $3$, consider the Galois closure $L$ of $K$. Then either $\Gal(L/\mathbb{Q})=\mathbb{Z}/3\mathbb{Z}$ or $\Gal(L/\mathbb{Q})=S_3$. We claim that there is an element $\sigma\in\Gal(L/\mathbb{Q})$ of order $3$, such that (after a necessary rearrangement) the embeddings $\tau_i:K\hooklongrightarrow\mathbb{C}, 1\leq i\leq 3$ satisfy
$$\tau_i=\sigma^i|_K$$

In fact, if $\Gal(L/\mathbb{Q})=\mathbb{Z}/3\mathbb{Z}$, then $L=K$, i.e. $K/\mathbb{Q}$ is a Galois extension. So $\Gal(K/\mathbb{Q})=\{\tau_1,\tau_2,\tau_3\}$. Let $\sigma$ be a generator of $\Gal(K/\mathbb{Q})=\mathbb{Z}/3\mathbb{Z}$. Then, after a necessary rearrangement of $\tau_1,\tau_2$ and $\tau_3$, we have $\tau_i=\sigma^i$.

Otherwise, if $\Gal(L/\mathbb{Q})=S_3$, then $L/K$ is a quadratic extension. There is an element $\sigma_2\in\Gal(L/\mathbb{Q})$ of order $2$ such that $\Gal(L/K)=\langle\sigma_2\rangle$. Let $\sigma_3\in\Gal(L/\mathbb{Q})$ be an element of order $3$. Then $\Gal(L/\mathbb{Q})=\langle\sigma_2,\sigma_3\rangle$. On the other hand, each $\tau_i$ extends to two embedding $\tau_{i1},\tau_{i2}:L\hooklongrightarrow\mathbb{Q}$ and $\Gal(L/\mathbb{Q})=\{\tau_{11},\tau_{12},\tau_{21},\tau_{22},\tau_{31},\tau_{32}\}$. Without loss of generality, suppose $\tau_3$ is the identity embedding, then $\{\tau_{31},\tau_{32}\}=\{id,\sigma_2\}$, and after a necessary rearrangement of $\tau_1$ and $\tau_2$, $\{\tau_{11},\tau_{12}\}=\{\sigma_3,\sigma_3\sigma_2\}, \{\tau_{21},\tau_{22}\}=\{\sigma_3^2,\sigma_3^2\sigma_2\}$. Let $\sigma=\sigma_3$. Then $\tau_i=\sigma^i|_K$.

Let $\wp'$ be a prime of $L$ over $\wp$ with residue field $k'=\mathcal{O}_L/\wp'$ (which is an extension of $k$). It is known in algebraic number theory that the Frobenius $\phi\in\Gal(k'/\mathbb{F}_p)$ is the reduction from a Frobenius element $\sigma'=Frob_{\wp'}$ in $\Gal(L/\mathbb{Q})$. It is easy to see $k'=k$ since the highest order of an element in $\Gal(L/\mathbb{Q})$ is $3$. Since the only elements of order $3$ in $\Gal(L/\mathbb{Q})$ are $\sigma$ and $\sigma^2$, then either $\sigma'=\sigma$ or $\sigma'=\sigma^2$. Without loss of generality, lets suppose $\sigma'=\sigma$. Then the following reduction diagram is commutative for all $1\leq i\leq 3$:
$$\xymatrix{X_1(N)\ar[r]^{\otimes\overline{\mathbb{F}}_p}\ar[d]^{\tau_i}&\widetilde{X}_1(N)\ar[r]^{\pi}\ar[d]^{\phi^i}&\widetilde{X}_0(N)\ar[d]^{\phi^i}\\
X_1(N)\ar[r]^{\otimes\overline{\mathbb{F}}_p}&\widetilde{X}_1(N)\ar[r]^{\pi}&\widetilde{X}_0(N)}
$$

Let $y_1,y_2,y_3$ (resp. $x_1,x_2,x_3$) be the images of $y$ (resp. $x$) under the distinct embeddings $\tau_i:K\hooklongrightarrow\mathbb{C}, 1\leq i\leq 3$. Since the action of $\Gal(\overline{\mathbb{Q}}/\mathbb{Q})$ on the covering $X_1(N)\longrightarrow X_0(N)$ is compatible, i.e. the following diagram is commutative, we have $x_i=\pi(y_i), 1\leq i\leq 3$.
$$\xymatrix {X_1(N)\ar[d]_{\pi}\ar[r]^{\tau_i}&X_1(N)\ar[d]^{\pi}\\
X_0(N)\ar[r]^{\tau_i}&X_0(N) }
$$

Let $c$ be a cusp of $X_1(N)$ such that
$$y\otimes\overline{\mathbb{F}}_p=c\otimes\overline{\mathbb{F}}_p$$
then for $1\leq i\leq 3$
$$y_i\otimes\overline{\mathbb{F}}_p=\tau_i(y)\otimes\overline{\mathbb{F}}_p=\phi^i(y\otimes\overline{\mathbb{F}}_p)
=\phi^i(c\otimes\overline{\mathbb{F}}_p)=\tau_i(c)\otimes\overline{\mathbb{F}}_p$$

We know that the action of each $\tau_i$ on the cusps factors through $\Gal(\mathbb{Q}(\zeta_N)/\mathbb{Q})\cong(\mathbb{Z}/N\mathbb{Z})^\times$. Suppose $c$ is represented by $(C_n,(\zeta_N^r,b))$, then $\Gal(\mathbb{Q}(\zeta_N)/\mathbb{Q})\cong(\mathbb{Z}/N\mathbb{Z})^\times$ acts on $c$ as
$$(C_n,(\zeta_N^r,b))^a=(C_n,(\zeta_N^{ra},b))$$
So all the $\tau_i(c)$ are in the form of $(C_n,(\zeta_N^{ra_i},b))$ for some $a_i\in(\mathbb{Z}/N\mathbb{Z})^\times$. Since $N$ is square free, we know they all maps to the unique cusp of the form $(C_n, G)$ on $X_0(N)$. Denote $(C_n,G)$ as $c_n$. Then
$$x_i\otimes\overline{\mathbb{F}}_p=\pi(y_i)\otimes\overline{\mathbb{F}}_p=\pi(y_i\otimes\overline{\mathbb{F}}_p)
=\pi(\tau_i(c)\otimes\overline{\mathbb{F}}_p)=\pi(\tau_i(c))\otimes\overline{\mathbb{F}}_p=c_n\otimes\overline{\mathbb{F}}_p$$
We know the Atkin-Lehner involutions act transitively on the cusps of $X_0(N)$ if $N$ is square free. In fact, by applying the Atkin-Lehner involution $w_n$ one gets that $w_n(c_n)=\infty$. And because the reduction diagram
$$\xymatrix {X_0(N)\ar[d]_{\otimes\overline{\mathbb{F}}_p}\ar[r]^{w_n}&X_0(N)\ar[d]^{\otimes\overline{\mathbb{F}}_p}\\
\widetilde{X}_0(N)\ar[r]^{w_n}&\widetilde{X}_0(N) }
$$
is commutative when the genus of $X_0(N)$ is positive (See Diamond-Shurman \cite{DiamondShurman} Theorem 8.5.7). So we have
$$w_n(x_i)\otimes\overline{\mathbb{F}}_p=w_n(x_i\otimes\overline{\mathbb{F}}_p)=w_n(c_n\otimes\overline{\mathbb{F}}_p)=
w_n(c_n)\otimes\overline{\mathbb{F}}_p=\infty\otimes\overline{\mathbb{F}}_p$$
i.e.
$$w_n(x_1)_{/\tau_1(\wp)}=\cdots=w_n(x_3)_{/\tau_3(\wp)}=w_n(c_n)_{/\wp}=\infty_{/\wp}.$$
\end{proof}

\begin{thm}\label{T4} Let $N$ be an odd square free positive integer such that $Gon(X_0(N))>d$ and the genus  $g(X_0(N))>0$. Suppose there is a prime $p\nmid N, p>2$ satisfying:
\begin{enumerate}
\item $N>(1+\sqrt{p^3})^2$ and $N$ is coprime with $p^{2}-1$.
\item $T_1\{0,\infty\},\cdots, T_{2d}\{0,\infty\}$ are linearly independent mod $p$ in $H_1(X_0(N),cusps,\mathbb{Z})$.
\end{enumerate}
Then for any elliptic curve $E$ defined over a cubic number field $K$, the cyclic group $\mathbb{Z}/N\mathbb{Z}$ is not a subgroup of $E(K)_{tor}$.
\end{thm}

\begin{proof} With Lemma \ref{Squarefree} at hand, the proof of this theorem is exactly the same as that of Theorem \ref{T3} except replacing $(x_1,\cdots,x_d)$ by $(w_n(x_1),\cdots,w_n(x_d))$ when necessary.
\end{proof}

\section{Proof of Theorem \ref{T1}}

The calculations in this section is done in Sage \cite{Sage}. The elements in $H_1(X_0(N),cusps,\mathbb{Z})$ can be represented by the \emph{Manin symbols} (detailed description of this treatment can be found in Stein's book \cite[\S 3]{Stein}). Under this representation, the element $\{0,\infty\}$ is represented by the Manin symbol $(0,1)$. By Proposition 20 of Merel \cite{Merel94}, the action of Hecke operators $T_n$ on Manin symbols can be calculated by the formula:
$$T_n(x,y)=\sum_{a>b\geq0,~~d>c\geq0,~~ad-bc=n}(x,y)\left[\left(
                         \begin{array}{cc}
                           a & b \\
                           c & d \\
                         \end{array}
                       \right)\right]=(x,y)h_n
$$
where in the sum $h_n=\sum_{a>b\geq0,~~d>c\geq0,~~ad-bc=n}\left[\left(
                         \begin{array}{cc}
                           a & b \\
                           c & d \\
                         \end{array}
                       \right)\right]$, if $(x',y')=(x,y)\left[\left(
                         \begin{array}{cc}
                           a & b \\
                           c & d \\
                         \end{array}
                       \right)\right]\in(\mathbb{Z}/N\mathbb{Z})^2$ and $\gcd(x',y',N)\neq1$, then we omit the corresponding summand.

When $n$ is small enough such that $\gcd(x',y',N)=1$ for all summands, the formula is independent of the level $N$. Under this assumption, the first six $h_n$'s are
$$\aligned h_1=&\left[\left(
                         \begin{array}{cc}
                           1 & 0 \\
                           0 & 1 \\
                         \end{array}
                       \right)\right]
\\h_2=&\left[\left(
                         \begin{array}{cc}
                           1 & 0 \\
                           0 & 2 \\
                         \end{array}
                       \right)\right]+\left[\left(
                         \begin{array}{cc}
                           1 & 0 \\
                           1 & 2 \\
                         \end{array}
                       \right)\right]+\left[\left(
                         \begin{array}{cc}
                           2 & 0 \\
                           0 & 1 \\
                         \end{array}
                       \right)\right]+\left[\left(
                         \begin{array}{cc}
                           2 & 1 \\
                           0 & 1 \\
                         \end{array}
                       \right)\right]
\\h_3=&\left[\left(
                         \begin{array}{cc}
                           1 & 0 \\
                           0 & 3 \\
                         \end{array}
                       \right)\right]+\left[\left(
                         \begin{array}{cc}
                           1 & 0 \\
                           1 & 3 \\
                         \end{array}
                       \right)\right]+\left[\left(
                         \begin{array}{cc}
                           1 & 0 \\
                           2 & 3 \\
                         \end{array}
                       \right)\right]+\left[\left(
                         \begin{array}{cc}
                           3 & 0 \\
                           0 & 1 \\
                         \end{array}
                       \right)\right]+\left[\left(
                         \begin{array}{cc}
                           3 & 1 \\
                           0 & 1 \\
                         \end{array}
                       \right)\right]+\left[\left(
                         \begin{array}{cc}
                           3 & 2 \\
                           0 & 1 \\
                         \end{array}
                       \right)\right]\\&+\left[\left(
                         \begin{array}{cc}
                           2 & 1 \\
                           1 & 2 \\
                         \end{array}
                       \right)\right]
\endaligned$$
$$\aligned h_4=&\left[\left(
                         \begin{array}{cc}
                           1 & 0 \\
                           0 & 4 \\
                         \end{array}
                       \right)\right]+\left[\left(
                         \begin{array}{cc}
                           1 & 0 \\
                           1 & 4 \\
                         \end{array}
                       \right)\right]+\left[\left(
                         \begin{array}{cc}
                           1 & 0 \\
                           2 & 4 \\
                         \end{array}
                       \right)\right]+\left[\left(
                         \begin{array}{cc}
                           1 & 0 \\
                           3 & 4 \\
                         \end{array}
                       \right)\right]+\left[\left(
                         \begin{array}{cc}
                           4 & 0 \\
                           0 & 1 \\
                         \end{array}
                       \right)\right]+\left[\left(
                         \begin{array}{cc}
                           4 & 1 \\
                           0 & 1 \\
                         \end{array}
                       \right)\right]\\&+\left[\left(
                         \begin{array}{cc}
                           4 & 2 \\
                           0 & 1 \\
                         \end{array}
                       \right)\right]+\left[\left(
                         \begin{array}{cc}
                           4 & 3 \\
                           0 & 1 \\
                         \end{array}
                       \right)\right]+\left[\left(
                         \begin{array}{cc}
                           2 & 0 \\
                           0 & 2 \\
                         \end{array}
                       \right)\right]+\left[\left(
                         \begin{array}{cc}
                           2 & 0 \\
                           1 & 2 \\
                         \end{array}
                       \right)\right]+\left[\left(
                         \begin{array}{cc}
                           2 & 1 \\
                           0 & 2 \\
                         \end{array}
                       \right)\right]+\left[\left(
                         \begin{array}{cc}
                           2 & 1 \\
                           2 & 3 \\
                         \end{array}
                       \right)\right]\\&+\left[\left(
                         \begin{array}{cc}
                           3 & 2 \\
                           1 & 2 \\
                         \end{array}
                       \right)\right]
\endaligned$$
$$\aligned h_5=&\left[\left(
                         \begin{array}{cc}
                           1 & 0 \\
                           0 & 5 \\
                         \end{array}
                       \right)\right]+\left[\left(
                         \begin{array}{cc}
                           1 & 0 \\
                           1 & 5 \\
                         \end{array}
                       \right)\right]+\left[\left(
                         \begin{array}{cc}
                           1 & 0 \\
                           2 & 5 \\
                         \end{array}
                       \right)\right]+\left[\left(
                         \begin{array}{cc}
                           1 & 0 \\
                           3 & 5 \\
                         \end{array}
                       \right)\right]+\left[\left(
                         \begin{array}{cc}
                           1 & 0 \\
                           4 & 5 \\
                         \end{array}
                       \right)\right]+\left[\left(
                         \begin{array}{cc}
                           5 & 0 \\
                           0 & 1 \\
                         \end{array}
                       \right)\right]\\&+\left[\left(
                         \begin{array}{cc}
                           5 & 1 \\
                           0 & 1 \\
                         \end{array}
                       \right)\right]+\left[\left(
                         \begin{array}{cc}
                           5 & 2 \\
                           0 & 1 \\
                         \end{array}
                       \right)\right]+\left[\left(
                         \begin{array}{cc}
                           5 & 3 \\
                           0 & 1 \\
                         \end{array}
                       \right)\right]+\left[\left(
                         \begin{array}{cc}
                           5 & 4 \\
                           0 & 1 \\
                         \end{array}
                       \right)\right]+\left[\left(
                         \begin{array}{cc}
                           2 & 1 \\
                           1 & 3 \\
                         \end{array}
                       \right)\right]+\left[\left(
                         \begin{array}{cc}
                           3 & 1 \\
                           1 & 2 \\
                         \end{array}
                       \right)\right]\\&+\left[\left(
                         \begin{array}{cc}
                           2 & 1 \\
                           3 & 4 \\
                         \end{array}
                       \right)\right]+\left[\left(
                         \begin{array}{cc}
                           4 & 3 \\
                           1 & 2 \\
                         \end{array}
                       \right)\right]+\left[\left(
                         \begin{array}{cc}
                           3 & 2 \\
                           2 & 3 \\
                         \end{array}
                       \right)\right]
\endaligned$$
$$\aligned h_6=&\left[\left(
                         \begin{array}{cc}
                           1 & 0 \\
                           0 & 6 \\
                         \end{array}
                       \right)\right]+\left[\left(
                         \begin{array}{cc}
                           1 & 0 \\
                           1 & 6 \\
                         \end{array}
                       \right)\right]+\left[\left(
                         \begin{array}{cc}
                           1 & 0 \\
                           2 & 6 \\
                         \end{array}
                       \right)\right]+\left[\left(
                         \begin{array}{cc}
                           1 & 0 \\
                           3 & 6 \\
                         \end{array}
                       \right)\right]+\left[\left(
                         \begin{array}{cc}
                           1 & 0 \\
                           4 & 6 \\
                         \end{array}
                       \right)\right]+\left[\left(
                         \begin{array}{cc}
                           1 & 0 \\
                           5 & 6 \\
                         \end{array}
                       \right)\right]\\&+\left[\left(
                         \begin{array}{cc}
                           6 & 0 \\
                           0 & 1 \\
                         \end{array}
                       \right)\right]+\left[\left(
                         \begin{array}{cc}
                           6 & 1 \\
                           0 & 1 \\
                         \end{array}
                       \right)\right]+\left[\left(
                         \begin{array}{cc}
                           6 & 2 \\
                           0 & 1 \\
                         \end{array}
                       \right)\right]+\left[\left(
                         \begin{array}{cc}
                           6 & 3 \\
                           0 & 1 \\
                         \end{array}
                       \right)\right]+\left[\left(
                         \begin{array}{cc}
                           6 & 4 \\
                           0 & 1 \\
                         \end{array}
                       \right)\right]+\left[\left(
                         \begin{array}{cc}
                           6 & 5 \\
                           0 & 1 \\
                         \end{array}
                       \right)\right]\\&+\left[\left(
                         \begin{array}{cc}
                           2 & 0 \\
                           0 & 3 \\
                         \end{array}
                       \right)\right]+\left[\left(
                         \begin{array}{cc}
                           2 & 0 \\
                           1 & 3 \\
                         \end{array}
                       \right)\right]+\left[\left(
                         \begin{array}{cc}
                           2 & 0 \\
                           2 & 3 \\
                         \end{array}
                       \right)\right]+\left[\left(
                         \begin{array}{cc}
                           2 & 1 \\
                           0 & 3 \\
                         \end{array}
                       \right)\right]+\left[\left(
                         \begin{array}{cc}
                           3 & 0 \\
                           0 & 2 \\
                         \end{array}
                       \right)\right]+\left[\left(
                         \begin{array}{cc}
                           3 & 1 \\
                           0 & 2 \\
                         \end{array}
                       \right)\right]\\&+\left[\left(
                         \begin{array}{cc}
                           3 & 2 \\
                           0 & 2 \\
                         \end{array}
                       \right)\right]+\left[\left(
                         \begin{array}{cc}
                           3 & 0 \\
                           1 & 2 \\
                         \end{array}
                       \right)\right]+\left[\left(
                         \begin{array}{cc}
                           2 & 1 \\
                           2 & 4 \\
                         \end{array}
                       \right)\right]+\left[\left(
                         \begin{array}{cc}
                           4 & 2 \\
                           1 & 2 \\
                         \end{array}
                       \right)\right]+\left[\left(
                         \begin{array}{cc}
                           2 & 1 \\
                           4 & 5 \\
                         \end{array}
                       \right)\right]+\left[\left(
                         \begin{array}{cc}
                           5 & 4 \\
                           1 & 2 \\
                         \end{array}
                       \right)\right]\\&+\left[\left(
                         \begin{array}{cc}
                           3 & 2 \\
                           3 & 4 \\
                         \end{array}
                       \right)\right]+\left[\left(
                         \begin{array}{cc}
                           4 & 3 \\
                           2 & 3 \\
                         \end{array}
                       \right)\right]
                       \endaligned$$

~

For $N=169,143,91,65,77,55$, the Manin basis of $H_1(X_0(N),cusps,\mathbb{Z})$ are listed in Table \ref{ManinBasis}. And the actions of $T_1,\cdots, T_6$ on the Manin symbol $(0,1)$ in terms of the Manin basis are given in Table \ref{HeckeOperators}.

\begin{table}[!ht]
\tabcolsep 0pt
\vspace*{0pt}
\begin{center}
\def\temptablewidth{1\textwidth}
\setlength{\abovecaptionskip}{0pt}
\setlength{\belowcaptionskip}{-5pt}
\caption{Manin basis of $H_1(X_0(N),cusps,\mathbb{Z})$}
\label{ManinBasis}
{\rule{\temptablewidth}{1pt}}
\begin{tabular*}{\temptablewidth}{@{\extracolsep{\fill}}ccccccccccccc}
$N$&$\dim$&Manin basis of $H_1(X_0(N),cusps,\mathbb{Z})$\\\hline
$169$&$29$&$(1,0),(1,133),(1,134),(1,135),(1,138),(1,139),(1,151),(1,152),(1,153),(1,158),$\\
&&$(1,159),(1,160),(1,163),(1,164),(1,165),(1,166),(1,167),(13,1),(13,2),(13,3),$\\
&&$(13,4),(13,5),(13,6),(13,7),(13,8),(13,9),(13,10),(13,11),(13,12)$\\
$143$&$29$&$(1,0),(1,83),(1,113),(1,127),(1,128),(1,135),(1,139),(1,140),(1,141),(11,3),$\\
&&$(11,4),(11,5),(11,6),(11,7),(11,8),(11,9),(11,10),(11,12),(13,1),(13,2),$\\
&&$(13,3),(13,4),(13,5),(13,6),(13,7),(13,8),(13,9),(13,10),(13,11)$\\
$91$&$17$&$(1,0),(7,1),(7,2),(7,4),(7,5),(7,8),(7,9),(7,10),(7,11),(7,12),$\\
&&$(13,1),(13,2),(13,3),(13,4),(13,5),(13,6),(13,7)$\\
$65$&$13$&$(1,0),(5,2),(5,3),(5,7),(5,9),(5,11),(5,12),(5,23),(13,1),(13,2),$\\
&&$(13,3),(13,4),(13,5)$\\
$77$&$17$&$(1,0),(1,74),(1,75),(7,1),(7,3),(7,5),(7,6),(7,8),(7,9),(7,10),$\\
&&$(11,1),(11,2),(11,3),(11,4),(11,5),(11,6),(11,7)$\\
$55$&$13$&$(1,0),(1,48),(5,2),(5,4),(5,7),(5,8),(5,9),(5,21),(11,1),(11,2),$\\
&&$(11,3),(11,4),(11,5)$\\

\end{tabular*}
{\rule{\temptablewidth}{1pt}}
\end{center}
\end{table}

\begin{table}[!ht]
\tabcolsep 0pt
\vspace*{0pt}
\begin{center}
\def\temptablewidth{1\textwidth}
\setlength{\abovecaptionskip}{0pt}
\setlength{\belowcaptionskip}{-5pt}
\caption{Hecke operators on $\{0,\infty\}$ in terms of Manin symbols}
\label{HeckeOperators}
{\rule{\temptablewidth}{1pt}}
\begin{tabular*}{\temptablewidth}{@{\extracolsep{\fill}}ccccccccccccc}
$N$&$T_i$&$T_i(0,1)$\\\hline
$169$&$T_1$&$(-1,0,0,0,0,0,0,0,0,0,0,0,0,0,0,0,0,0,0,0,0,0,0,0,0,0,0,0,0)$\\
&$T_2$&$(-3,0,0,0,0,0,0,0,0,0,0,0,0,0,0,0,1,0,0,0,0,0,0,0,0,0,0,0,0)$\\
&$T_3$&$(-4,0,1,-1,1,-1,0,1,0,0,-1,0,0,0,1,0,1,0,0,0,0,0,0,0,0,0,0,0,0)$\\
&$T_4$&$(-7,0,0,-1,1,-1,0,0,0,0,0,0,-1,1,1,0,2,0,0,0,0,0,0,0,0,0,0,0,0)$\\
&$T_5$&$(-6,0,0,-2,0,0,0,1,0,0,-1,0,0,2,0,0,1,0,0,0,0,0,0,0,0,0,0,0,0)$\\
&$T_6$&$(-12,0,3,-3,2,-2,0,2,0,0,-2,0,1,0,2,0,4,0,0,0,0,0,0,0,0,0,0,0,0)$\\
$143$&$T_1$&$(-1,0,0,0,0,0,0,0,0,0,0,0,0,0,0,0,0,0,0,0,0,0,0,0,0,0,0,0,0)$\\
&$T_2$&$(-3,0,0,0,0,0,0,0,1,0,0,0,0,0,0,0,0,0,0,0,0,0,0,0,0,0,0,0,0)$\\
&$T_3$&$(-4,0,0,0,0,0,0,0,1,1,0,0,0,0,0,0,1,-1,-1,-1,1,0,0,0,0,1,-1,0,1)$\\
&$T_4$&$(-7,0,0,0,1,0,1,-1,3,1,0,0,0,-1,0,1,1,-1,0,-1,0,0,0,0,0,1,-1,0,1)$\\
&$T_5$&$(-6,0,0,0,1,0,1,-1,1,1,-1,2,0,-1,2,0,1,-3,-2,-1,0,-1,2,2,-1,1,-1,0,1)$\\
&$T_6$&$(-12,0,0,0,1,0,1,-1,4,3,-1,0,1,0,0,0,3,-4,-3,-2,2,-1,1,1,-1,3,-2,0,2)$\\
$91$&$T_1$&$(-1,0,0,0,0,0,0,0,0,0,0,0,0,0,0,0,0)$\\
&$T_2$&$(-3,0,1,-1,-1,-1,-1,1,1,0,0,0,1,0,0,0,-1)$\\
&$T_3$&$(-4,-1,1,-2,0,0,-2,2,1,-1,1,-1,2,0,-1,1,-2)$\\
&$T_4$&$(-7,-1,3,-3,-2,-2,-3,2,3,-1,1,-1,3,1,-1,1,-4)$\\
&$T_5$&$(-6,-1,1,-2,-1,-1,-2,2,1,-1,1,0,2,0,0,1,-4)$\\
&$T_6$&$(-12,-3,4,-6,-2,-2,-6,6,4,-3,3,-2,6,0,-2,3,-8)$\\
$65$&$T_1$&$(-1,0,0,0,0,0,0,0,0,0,0,0,0)$\\
&$T_2$&$(-3,1,-1,0,1,0,-1,-1,-1,0,1,1,-1)$\\
&$T_3$&$(-4,1,-1,0,2,-1,-2,-1,-2,0,2,2,-2)$\\
&$T_4$&$(-7,3,-2,0,4,-1,-4,-2,-4,-1,3,4,-2)$\\
&$T_5$&$(-5,2,-2,0,2,-2,-4,-2,-3,-1,3,3,-1)$\\
&$T_6$&$(-12,5,-4,0,6,-2,-7,-4,-6,-1,6,7,-6)$\\
$77$&$T_1$&$(-1,0,0,0,0,0,0,0,0,0,0,0,0,0,0,0,0)$\\
&$T_2$&$(-3,0,1,0,0,0,0,0,0,0,0,0,0,0,0,0,0)$\\
&$T_3$&$(-4,0,1,0,0,1,1,0,-1,0,-1,0,0,0,1,-1,1)$\\
&$T_4$&$(-7,0,3,0,-1,1,1,-1,-1,0,-1,0,1,1,1,-1,-1)$\\
&$T_5$&$(-6,0,2,-1,0,2,2,0,-2,-1,-1,0,0,0,2,-1,0)$\\
&$T_6$&$(-12,0,4,-1,0,3,3,0,-2,-1,-2,0,0,0,2,-2,2)$\\
$55$&$T_1$&$(-1,0,0,0,0,0,0,0,0,0,0,0,0)$\\
&$T_2$&$(-3,0,0,1,1,-1,0,-1,-1,1,0,0,0)$\\
&$T_3$&$(-4,0,-1,2,2,0,-1,-2,-2,1,1,0,0)$\\
&$T_4$&$(-7,0,0,3,3,-2,0,-4,-3,2,0,1,0)$\\
&$T_5$&$(-5,0,-1,2,2,-2,-1,-4,-3,2,0,1,1)$\\
&$T_6$&$(-12,0,-2,6,6,-2,-2,-8,-6,4,2,2,-2)$\\

\end{tabular*}
{\rule{\temptablewidth}{1pt}}
\end{center}
\end{table}

\subsection{$N=169$}

It is seen in Table \ref{HeckeOperators} that $T_1\{0,\infty\},\cdots, T_6\{0,\infty\}$ are linearly independent mod $5$. By Proposition \ref{Ogg} and \ref{HasegawaShimura}, we know $Gon(X_0(169))>3$. Since $169\nmid 5^{2i}-1, i=1,2,3$, and $169>(1+\sqrt{5^3})^2$, then by Theorem \ref{T3}, $\mathbb{Z}/169\mathbb{Z}$ is not a subgroup of $E(K)_{tor}$.

\subsection{$N=143,91,65,77,55$}

It is seen in Table \ref{HeckeOperators} that $T_1\{0,\infty\},\cdots, T_6\{0,\infty\}$ are linearly independent mod $3$. By Proposition \ref{Ogg} and \ref{HasegawaShimura}, we know $Gon(X_0(N))>3$. Since $(N, 3^{2}-1)=1$ and $N>(1+\sqrt{3^3})^2$, then by Theorem \ref{T4}, $\mathbb{Z}/N\mathbb{Z}$ is not a subgroup of $E(K)_{tor}$.

\subsection*{Acknowledgements}
It is a pleasure to thank my PhD advisor Sheldon Kamienny for introducing this research topic and for providing many valuable ideas and insightful comments throughout the research. I wish to thank Maarten Derickx for correcting mistakes in Lemma \ref{General} and Lemma \ref{Squarefree}. I also wish to thank Andrew Sutherland and Andreas Schweizer for pointing out several errors in an earlier version of this paper.

\end{document}